\definecolor{darkred}{rgb}{1,0,0} %can change the intensity in [0,1]
\definecolor{darkgreen}{rgb}{0,1,0}
\definecolor{darkblue}{rgb}{0,0,1}
\newcommand{\udots}{\mathinner{\mskip1mu\raise1pt\vbox{\kern7pt\hbox{.}}
\mskip2mu\raise4pt\hbox{.}\mskip2mu\raise7pt\hbox{.}\mskip1mu}}
\newcommand{\SC}{{\mathcal{C}}}
\newcommand{\SD}{{\mathcal{D}}}
\newcommand{\SE}{{\mathcal{E}}}
\newcommand{\SH}{{\mathcal{H}}}
\newcommand{\SL}{{\mathcal{L}}}
\newcommand{\SM}{{\mathcal{M}}}
\newcommand{\SN}{{\mathcal{N}}}
\newcommand{\SO}{{\mathcal{O}}}
\newcommand{\SU}{{\mathcal{U}}}
\newcommand{\SX}{{\mathcal{X}}}
\newcommand{\PP}{\mathbb{P}}
\newcommand{\ZZ}{\mathbb{Z}}
\newcommand{\CC}{\mathbb{C}}
\newcommand{\QQ}{\mathbb{Q}}
\newcommand{\GG}{\mathbb{G}}
\newcommand{\Spec}{\operatorname{Spec}}
\newcommand{\Hom}{\operatorname{Hom}}
\newcommand{\id}{\operatorname{Id}}
\newcommand{\im}{\operatorname{Im}}
\newcommand{\too}{\longrightarrow}
\newcommand{\rk}{\operatorname{rk}}
\newcommand{\End}{\operatorname{End}}
\newcommand{\tr}{\operatorname{tr}}
\newcommand{\op}{\operatorname}
\newcommand{\simp}{\op{simp}}
\newcommand{\svb}{{\op{s-vb}}}
\newcommand{\ssvb}{\op{ss-vb}}
\newtheorem{proposition}{Proposition}[section]
\newtheorem{theorem}[proposition]{Theorem}
\newtheorem{definition}[proposition]{Definition}
\newtheorem{lemma}[proposition]{Lemma}
\newtheorem{corollary}[proposition]{Corollary}
\theoremstyle{definition}
\newtheorem{remark}[proposition]{Remark}
\numberwithin{equation}{section}
\begin{document}

\title{Torelli theorem for moduli stacks of vector 
bundles and principal $G$-bundles}

\author[D. Alfaya]{David Alfaya}

\address{Department of Applied Mathematics and Institute for Research in Technology, ICAI School of Engineering,
Comillas Pontifical University, C/Alberto Aguilera 25, 28015 Madrid, Spain}

\email{dalfaya@comillas.edu}

\author[I. Biswas]{Indranil Biswas}

\address{Department of Mathematics, Shiv Nadar University, NH91, Tehsil Dadri,
Greater Noida, Uttar Pradesh 201314, India}

\email{indranil.biswas@snu.edu.in, indranil29@gmail.com}

\author[T. L. G\'omez]{Tom\'as L. G\'omez}

\address{Instituto de Ciencias Matem\'aticas (ICMAT), CSIC-UAM-UC3M-UCM,
Nicol\'as Cabrera 15, Campus Cantoblanco UAM, 28049 Madrid, Spain}

\email{tomas.gomez@icmat.es}

\author[S. Mukhopadhyay]{Swarnava Mukhopadhyay}

\address{School of Mathematics, Tata Institute of Fundamental Research,
Homi Bhabha Road, Mumbai 400005, India}

\email{swarnava@math.tifr.res.in}

\subjclass[2020]{14C34,14H60,14D23}

\keywords{Torelli theorem, Moduli stack, Higgs bundle, Hitchin map.}

\date{}

\begin{abstract}
Given any irreducible smooth complex projective curve $X$, of genus at least $2$, consider the moduli stack of 
vector bundles on $X$ of fixed rank and determinant. It is proved that the isomorphism class of the stack uniquely 
determines the isomorphism class of the curve $X$ and the rank of the vector bundles. The case of trivial 
determinant, rank $2$ and genus $2$ is specially interesting: the curve can be recovered from the moduli stack, 
but not from the moduli space (since this moduli space is $\mathbb{P}^3$ thus independently of the curve).

We also prove a Torelli theorem for moduli stacks of principal $G$-bundles on a curve of genus at least $3$, where 
$G$ is any non-abelian reductive group.
\end{abstract}

\maketitle

\section{Introduction}

Let $X$ be a smooth complex projective curve of genus $g$, with $g\,\ge\, 2$. The classical Torelli theorem states 
that the isomorphism class of the canonically polarized Jacobian variety $J(X)$ determines uniquely the 
isomorphism class of the curve. Natural generalizations of this problem to moduli spaces of vector bundles of 
higher rank have been studied extensively, addressing the question whether the geometry of the curve $X$ can be 
recovered from the isomorphism class of a certain moduli space of rank $r$ vector bundles on $X$.

Fix a line bundle $\xi$ on the curve $X$, and let $M^{\ssvb}(X,r,\xi)$ denote the moduli space of semistable 
vector bundles $E$ of rank $r$ on $X$ such that $\det(E)\,\cong\, \xi$.

Mumford and Newstead \cite{MN68} and Tyurin \cite{Tyu69} proved that if $X$ and $X'$ have genus at least 2 
and the degree of the determinant is odd, then $M^{\ssvb}(X,2,\xi)\,\cong\, M^{\ssvb}(X,2,\xi)$ implies that 
$X\,\cong\, X'$. This Torelli type result was then extended to moduli
spaces of vector bundles of rank $r$ when the degree of 
the determinant is coprime with $r$ by Tyurin \cite{Tyu70} and by Narasimhan and Ramanan \cite{NR75}.

Kouvidakis and Pantev \cite{KP95} obtained a Torelli theorem for genus at least three and any rank $r\,\ge\, 2$ that did not need 
the coprimality condition on rank and degree; they proved that $M^{\ssvb}(X,r,\xi)\,\cong\, M^{\ssvb}(X',r,\xi')$ implies that $X\,\cong 
\, X'$. Other proofs for this Torelli theorem for genus at least 4 have been given with different techniques by 
Hwang and Ramanan \cite{HR04}, by Sun \cite{Sun05} and by Biswas, G\'omez and Mu\~noz \cite{BGM13}.

Using the techniques in \cite{BGM13}, a 2-birational version of the Torelli was found in \cite{AB} for curves 
of genus at least 4; it was shown there that both the pair $(r,\,\pm \deg(\xi)\, \pmod{r})$ and the curve
can be recovered from the geometry of the moduli scheme. Another Torelli type theorem for moduli spaces of
rank three vector bundles with trivial 
determinant over genus 2 curves was found by Nguyen \cite{Ngu07}. On the other hand, a Torelli theorem for 
the moduli spaces of principal $G$-bundles over curves of genus at least $3$, where $G$ is a complex 
reductive group, was also proven by Biswas and Hoffmann \cite{BH12}.

In this work we study the moduli stack $\SM(X,r,\xi)$ of vector bundles over $X$ of rank $r$ with fixed 
determinant $\xi$, in particular, we prove a Torelli type theorem for this moduli stack. The objects of this 
moduli stack are pairs $(E,\,\varphi)$ where $E$ is a vector bundle of rank $r$ on $X$ and $\varphi\,:\,\det E 
\,\longrightarrow\, \xi$ is an isomorphism. An isomorphism between two objects $(E,\, \varphi)$ and 
$(E',\varphi')$ is an isomorphism $\alpha$ between $E$ and $E'$ with $\varphi\,=\, \varphi'\circ \det\alpha$. In 
particular, the automorphism group of $(E,\,\varphi)$ is the finite group of $\ZZ/r\ZZ$ when $E$ is a simple 
vector bundle.

\begin{theorem}[{Theorem \ref{thm:main}}]
\label{thm:intro}
Let $r,\,r'\,\ge\, 2$. If $X$ and $X'$ are curves of genus $g,\,g'\,\ge\, 2$ 
and $\SM(X,r,\xi)\,\cong\, \SM(X',r',\xi')$,
then $X\,\cong\, X'$ and $r\,=\,r'$.
\end{theorem}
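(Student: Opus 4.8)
The plan is to exploit two kinds of intrinsic data carried by the stack $\SM(X,r,\xi)$: the automorphism (gerbe) structure, and the cotangent stack together with its affinization. First I would recover the rank. Every object $(E,\varphi)$ has the scalars $\mu_r \,=\, \{\lambda \in \CC^{*} : \lambda^{r}=1\} \,\cong\, \ZZ/r\ZZ$ in the centre of its automorphism group, and on the open dense substack of simple bundles (nonempty since $g \ge 2$) the automorphism group is exactly $\ZZ/r\ZZ$. Thus the minimal automorphism group over all objects is the finite group $\ZZ/r\ZZ$, an invariant of the abstract stack; equivalently, $\SM(X,r,\xi)$ is a $\mu_r$-gerbe over its rigidification along the simple locus, and the band $\mu_r$ is intrinsic. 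Any isomorphism $F\colon \SM(X,r,\xi) \xrightarrow{\sim} \SM(X',r',\xi')$ therefore forces $\ZZ/r\ZZ \cong \ZZ/r'\ZZ$, i.e. $r \,=\, r'$.

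Next I would recover the genus. The cotangent stack $T^{*}\SM(X,r,\xi)$ is intrinsic to $\SM(X,r,\xi)$, and it is canonically identified with the moduli stack of $K_X$-Higgs bundles $(E,\varphi,\theta)$ with $\theta\colon E \to E \otimes K_X$ trace-free. Taking global functions (the affinization, i.e. the Hitchin map) produces the Hitchin base $B \,=\, \bigoplus_{i=2}^{r} H^{0}(X, K_X^{\otimes i})$, and the $\CC^{*}$-action scaling cotangent fibres equips the ring of functions with a grading in which $H^{0}(K_X^{\otimes i})$ sits in degree $i$. This graded ring is intrinsic: it is a polynomial algebra whose minimal generators in degree $i$ number $h^{0}(K_X^{\otimes i})$. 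Reading off the degree-$2$ generators gives $h^{0}(K_X^{\otimes 2}) = 3g-3$, whence $g \,=\, g'$ (and the top generating degree re-proves $r = r'$).

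It remains to recover the isomorphism class of $X$ itself, which is the heart of the matter. The Hitchin map $h\colon T^{*}\SM \to B$ being intrinsic, so is its discriminant hypersurface $\Delta \subset B$, the locus of degenerate spectral data. In the rank-$2$ case $B = H^{0}(K_X^{\otimes 2})$ and $\Delta$ is the dual variety of the bicanonical image of $X$ in $\PP(B^{*})$; by biduality one recovers that image, which equals $X$ whenever the bicanonical map is an embedding. More generally, for $r \ge 3$ the degree-$3$ part $H^{0}(K_X^{\otimes 3})$ already embeds $X$ tricanonically, and the corresponding stratum of $\Delta$ returns $X$ by the same duality.

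The main obstacle is the hyperelliptic case --- in particular the distinguished genus-$2$, rank-$2$ case --- where the bicanonical map is two-to-one onto a conic, so $\Delta$ alone recovers only $\PP^{1}$. Here the purely coarse/schematic data is insufficient (indeed the coarse moduli space is $\PP^{3}$), and one must use genuinely stacky structure. I would recover the Jacobian from the unstable Harder--Narasimhan strata of $\SM(X,2,\xi)$: an unstable bundle sits in an extension $0 \to L \to E \to \xi \otimes L^{-1} \to 0$ with $\deg L > \tfrac{1}{2}\deg\xi$, so each stratum fibres over $\operatorname{Pic}^{d}(X)$; identifying these strata stack-theoretically (for instance by the jump locus of automorphism groups) recovers $\Jac(X)$ with its principal polarization, and classical Torelli then yields $X \cong X'$. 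The delicate point throughout is that these invariants --- the semistable and Harder--Narasimhan loci, and the Hitchin structure --- must be characterised intrinsically, using only the abstract stack isomorphism $F$ and not an a priori compatibility with the geometry of the curve.
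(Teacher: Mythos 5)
Your overall strategy --- recover $r$ from minimal automorphism groups, identify the cotangent stack of the simple locus with a Higgs moduli stack, affinize to get the Hitchin base, and recover $X$ as the dual variety of (a stratum of) the discriminant --- is exactly the paper's main line (Section 2, Theorem \ref{thm:HitchinMap}). But there are two genuine gaps. First, the assertion that ``the Hitchin map being intrinsic, so is its discriminant hypersurface $\Delta\subset B$, the locus of degenerate spectral data'' is circular: ``degenerate spectral data'' presupposes the identification of the abstract affinization with $\bigoplus_k H^0(K_X^{\otimes k})$, which is precisely what must be transported along the unknown isomorphism. One needs an intrinsic characterization of $\SD$ inside $\Spec\Gamma(T^*\SM^{\simp})$; the paper supplies one (Corollary \ref{cor:recoverDiscriminant}): $\SD$ is the closure of the images of maps $\PP^1\to\SN^{\simp}$ whose image contains two non-isomorphic points. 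Proving that such rational curves actually cover a dense subset of $\SD$ (Lemma \ref{lemma:P1Discriminant}) requires the codimension bound $(g-1)(r-1)\ge 3$, which fails for $(g,r)\in\{(2,2),(2,3),(3,2)\}$; likewise the identification $\Gamma(T^*\SM^{\simp})\cong\Gamma(W)$ rests on a Hartogs argument needing codimension $\ge 2$, which fails for $(2,2)$. So your argument silently breaks not only in the genus-$2$, rank-$2$ case you flag, but also for $(2,3)$ and $(3,2)$. The paper handles those two cases by a separate route: recover the coarse moduli scheme from the stack via $\SL$-semistability (``beyond GIT''), distinguish the degree classes by a dimension count and the Brauer group of $M^{\ssvb}$, and invoke the known scheme-level Torelli theorems of Kouvidakis--Pantev, Tyurin, Narasimhan--Ramanan and Nguyen. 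Your proposal has no counterpart for this.

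Second, your treatment of the $(2,2)$ case is incomplete at the decisive point. Recovering $\Jac(X)$ as an abstract abelian variety from the Harder--Narasimhan strata is plausible, but the classical Torelli theorem needs the \emph{principal polarization}, and you give no mechanism for producing it from the unstable strata. The paper instead works with the strictly semistable locus: the non-simple semistable points map onto $S(X,2,\SO_X)\cong K(X)=J(X)/\{\pm1\}$ embedded in $M^{\ssvb}(X,2,\SO_X)\cong\PP^3$; one reconstructs the double cover $J(X)\to K(X)$ from $\pi_1$ of the smooth locus of $K(X)$, and then obtains the polarization by restricting the ample generator of $\op{Pic}(M^{\ssvb})$, which by Narasimhan--Ramanan pulls back to a multiple of the theta polarization. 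Without an analogous source of the polarization, your final step does not close.
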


This theorem applies to all instances where the rank and genus are both at least 2 without any additional 
coprimality conditions. Interestingly, this includes a case where the Torelli theorem
for the moduli scheme fails. Narasimhan and Ramanan \cite{NR69} proved that the moduli scheme 
$M^{\ssvb}(X,2,\SO_X)$ is isomorphic to $\PP^3$ for every genus 2 curve $X$. We show that 
that $X$ can be recovered from the complete moduli stack $\SM(X,2,\SO_X)$ and even from the substack 
$\SM^{\ssvb}(X,r,\SO_X)$ of semistable vector bundles (see Theorem \ref{thm:rank2} and Remark 
\ref{rmk:rank2}).
In the case of the moduli scheme, in Remark \ref{rmk:modulischeme} we see that the case $(g,r)\,=\,(2,2)$ is
the only exception to the Torelli theorem.

Theorem \ref{thm:intro} has been composed combining three different Torelli theorems for stacks which 
have been proven through three different strategies and using different techniques. Each of these 
theorems is valid for certain combinations of the genus of the curve and the rank of the bundle which 
do not cover the entire set of possibilities considered by Theorem \ref{thm:intro}. An additional 
argument has been made based on computations of the dimension and the Brauer class of the moduli space 
which allows us to apply selectively the appropriate version of the Torelli in each case and to combine them 
to obtain the global result summarized by Theorem \ref{thm:intro} (see Theorem \ref{thm:main}).

The first proof is based on studying the cotangent bundle to the substack of simple points and identifying it with 
a moduli stack of Higgs bundles. Working analogously to \cite{BGM13}, it is proven that the Hitchin map can be 
recovered from the geometry of this substack, and the Torelli theorem follows from a study of the geometry of the 
discriminant locus inside the Hitchin base. Due to a constraint on the codimension of a certain subvariety, the 
result works for all pairs of genus and rank $(g,\,r)$ such that $g\,\ge\, 2$ and $r\,\ge\, 2$ except for the 
three cases $(2,\,2)$, $(2,\,3)$ and $(3,\,2)$ (which correspond to the moduli schemes of the lowest dimensions 3, 
8 and 6 respectively). The details are presented in Section \ref{section:Hitchin}.

The second proof uses ``beyond GIT'' techniques based on the work of Alper, Halpern-Leistner and Heinloth 
\cite{Alp,He,HL,AlHLHe} to recover the moduli space of semistable vector bundles from the moduli stack and then 
reduces the problem to the study of the Torelli theorem for the corresponding moduli space. The limitation of this 
technique is that it can only be used in cases where the Torelli theorem is known for the corresponding moduli 
scheme. Combined with the results in \cite{AB}, we use it to obtain a Torelli theorem for curves of genus $g\ge 4$ 
in which we recover the pair $(r,\,\pm \deg(\xi)\, \pmod{r})$ in addition to the curve (see Theorem 
\ref{thm:beyondGIT}). We can also use it to prove a Torelli theorem in genus $g\,=\,3$, but it is limited to 
certain cases in $g\,=\,2$. This is studied in Section \ref{section:beyondGIT}.

This technique also allows us to prove a Torelli theorem for moduli stacks of $G$-bundles, where $G$ is any 
algebraic connected reductive complex group. Given a curve $X$, let $\SM^d_G(X)$ denote the component of the moduli stack 
of principal $G$-bundles on $X$ corresponding to a fixed $d\,\in\, \pi_1(G)$. We prove the following.

\begin{theorem}[{Corollary \ref{cor:TorellliG-bundles}}]
Let $X$ and $X'$ be smooth projective complex curves of genus at least 3, and let $G$ and $G'$ be algebraic connected 
reductive complex groups. If a moduli stack $\SM^d_G(X)$ of principal $G$-bundles over $X$ is isomorphic to a
stack $\SM^{d'}_{G'}(X')$ of principal $G'$-bundles over $X'$, then $X\,\cong\, X'$.
\end{theorem}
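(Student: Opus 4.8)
The plan is to extend the ``beyond GIT'' strategy of Theorem \ref{thm:beyondGIT} from vector bundles to principal $G$-bundles, reducing the statement to the Torelli theorem for the moduli \emph{space} of semistable $G$-bundles established by Biswas and Hoffmann \cite{BH12}. The guiding principle is that, although $\SM^d_G(X)$ is the full (non-finite-type) stack of all principal $G$-bundles of topological type $d$, both its open locus of semistable bundles and the good moduli space of that locus are intrinsic invariants of the stack, hence transported by any abstract isomorphism. I would carry this out in three steps: recover the semistable substack, recover its good moduli space as a scheme, and then invoke \cite{BH12}.

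First I would recover the semistable open substack $\SM^{d,\mathrm{ss}}_G(X)\,\subset\,\SM^d_G(X)$ intrinsically. By the results of Halpern-Leistner and of Alper--Halpern-Leistner--Heinloth \cite{He,HL,AlHLHe}, the stack carries a canonical $\Theta$-stratification---the Harder--Narasimhan--Kempf--Ness stratification---whose open stratum is exactly $\SM^{d,\mathrm{ss}}_G(X)$; equivalently, this locus is characterized as the maximal open substack admitting a separated good moduli space. Both descriptions are formulated purely in terms of the intrinsic geometry of the stack (filtrations parametrized by maps out of $\Theta\,=\,[\mathbb{A}^1/\mathbb{G}_m]$, together with S-completeness and $\Theta$-reductivity). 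Consequently any isomorphism $\SM^d_G(X)\,\cong\,\SM^{d'}_{G'}(X')$ must send semistable locus to semistable locus, inducing an isomorphism $\SM^{d,\mathrm{ss}}_G(X)\,\cong\,\SM^{d',\mathrm{ss}}_{G'}(X')$.

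Next, by \cite{AlHLHe} the semistable substack admits a good moduli space $\SM^{d,\mathrm{ss}}_G(X)\,\longrightarrow\, M^d_G(X)$, and this good moduli space is universal among maps to algebraic spaces, hence functorial and canonically determined by the stack. Therefore the isomorphism of semistable substacks descends to an isomorphism of schemes $M^d_G(X)\,\cong\, M^{d'}_{G'}(X')$, where each side is the GIT moduli space of semistable principal bundles. At this point the Torelli theorem of Biswas and Hoffmann \cite{BH12} applies, the hypothesis $g,\,g'\,\ge\,3$ being precisely the one required there, and yields $X\,\cong\, X'$.

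The step I expect to be the main obstacle is the intrinsic characterization of semistability: one must ensure that the data singling out the open stratum (the numerical invariant, or equivalently the separatedness of the resulting good moduli space) depends only on the abstract stack and not on its presentation as a stack of bundles, so that it is preserved by an isomorphism that need not respect the groups $G,\,G'$ or the curves. A secondary point is to confirm that the good moduli space produced by the beyond-GIT machinery is exactly the coarse/GIT space to which \cite{BH12} applies, rather than some intermediate quotient, and that the full reductive case is covered: when $G$ has a positive-dimensional central torus the corresponding abelian factor of $M^d_G(X)$ is a product of Jacobian torsors, for which the classical Torelli theorem already recovers $X$ in the given genus range.
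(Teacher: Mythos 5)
Your overall reduction is exactly the paper's: recover the Ramanathan-semistable locus intrinsically, descend to the good moduli space, and invoke \cite[Theorem 0.1]{BH12} (the paper's proof of Corollary \ref{cor:TorellliG-bundles} literally says ``arguing as done for Proposition \ref{prop:beyondGIT}''). The problem is that the one step you yourself flag as the main obstacle --- the intrinsic characterization of semistability --- is precisely where the real work lies, and the characterizations you propose are not justified and do not obviously work. A $\Theta$-stratification is not canonical: it depends on the choice of a numerical invariant, i.e.\ essentially on a (rational) line bundle $\SL$ on the stack together with a norm on graded points, and different choices of $\SL$ produce different open strata (for instance $\SL$ trivial makes everything semistable, and on the $B\GG_m$ and torus factors of the stack various choices give different answers). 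An abstract isomorphism of stacks has no reason to match up a particular choice on each side. Your alternative --- ``the maximal open substack admitting a separated good moduli space'' --- is not known to be well defined: you would need to show such a maximal substack exists and is unique (unions of opens with separated good moduli spaces need not have one, and plenty of proper open substacks of the stable locus admit separated good moduli spaces), and then that it coincides with the Ramanathan-semistable locus; none of this is argued.

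The paper resolves this differently and concretely in Lemma \ref{recover-ss-gbundles}: for every rational line bundle $\SL$ on $\SM^d_G(X)$ one forms the $\SL$-semistable locus $\SU_\SL$, and one proves that the intersection of all \emph{nonempty} $\SU_\SL$ equals the Ramanathan-semistable locus (and is itself some $\SU_\SL$). This is a choice-free characterization, hence transported by any isomorphism, but proving it requires computing $\op{Pic}(\SM^d_G(X))\otimes\QQ$ via \cite{BH10}, reducing to $G/Z'\,=\,G/[G,G]\times G/Z$, handling the torus and Jacobian factors (where only the trivial line bundle gives a nonempty semistable locus), and identifying positive multiples of the determinant bundle on each simple adjoint factor with Ramanathan semistability. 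Without this lemma, or a genuine proof of one of your proposed characterizations, your argument has a gap at its central step. Your closing remarks are fine but secondary: once the semistable substack is matched, corepresentability of the good moduli space gives the isomorphism of the usual coarse moduli schemes, and \cite{BH12} applies directly for $g,g'\ge 3$ (note the paper's abstract restricts to non-abelian $G$, consistent with the hypotheses of \cite{BH12}).
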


In Section \ref{section:rank2}, a proof for the Torelli theorem for stacks of rank 2 vector bundles with trivial 
determinant is obtained by showing that the projection of the substack of simple semistable bundles onto 
the moduli space of semistable vector bundles coincides with the quotient of the Jacobian of $X$ by 
the involution $L\,\longmapsto L^{-1}$. This is used in studying the earlier
mentioned special case of rank 2 
vector bundles with trivial determinant over a genus 2 curve.

Finally, all these results are combined in Section \ref{section:mainThm} to prove Theorem 
\ref{thm:intro}.

\section{A Torelli theorem using the Hitchin map}
\label{section:Hitchin}

Let $X$ be an irreducible smooth complex projective curve
of genus $g$, with $g\, \geq\,2$. Fix a line bundle $\xi$ on $X$. Let
$\SM\,=\, \SM(X,r,\xi)$
be the moduli stack parametrizing the vector bundles $E$ on $X$ of rank $r$ equipped with an
isomorphism $$\det(E)\,:=\,\wedge^r E\,\stackrel{\cong}{\longrightarrow}
\, \xi .$$ Let
$\SM^{\simp}(X,r,\xi)\subset \SM$ be the substack of simple points in $\SM$, i.e., the locus of vector
bundles $E$ with isomorphism $\det(E)\,\stackrel{\cong}{\longrightarrow}\, \xi$ whose automorphism group
is the group of $r$-th roots of $1\, \in\, {\mathbb C}$.

Recall that a vector bundle $E$ is said to be \emph{stable} (respectively, \emph{semistable}) if for any proper subbundle $0\ne F\subsetneq E$
$$\frac{\deg(F)}{\rk(F)} \,\,<\,\, \frac{\deg(E)}{\rk(E)} \ \ \ \, (\text{ respectively, }\,\le)$$

Let $\SM^{\svb}(X,r,\xi)\,\subset\, \SM(X,r,\xi)$ be the substack of stable vector bundles with fixed 
determinant $\xi$, and denote by $M^{\svb}(X,r,\xi)$ the corresponding moduli scheme of rank $r$ stable 
vector bundles on $X$ with fixed determinant $\xi$. Clearly, we have a quotient map 
$$\SM^{\svb}(X,r,\xi)\,\longrightarrow\, M^{\svb}(X,r,\xi).$$

The zero part of the cotangent complex of $\SM^{\simp}(X,r,\xi)$ over a vector bundle $E$ is isomorphic, 
through Serre duality, to $H^0(X,\, \End_0(E)\otimes K_X)$, where $K_X$ is the canonical line bundle 
of $X$ and $\End_0(E)\, \subset\, \End (E)$ is the subbundle of corank one defined by the
sheaf of endomorphisms of trace zero. Thus, we can interpret the total space of that sheaf as the moduli
stack $\SN^{\simp}(X,r,\xi)$ 
of pairs $(E,\,\varphi)$, where $$\varphi\ \in\ H^0(X,\, \End_0(E)\otimes K_X)$$ and $E$ is equipped with 
an isomorphism $\det(E)\,\stackrel{\cong}{\longrightarrow}\, \xi$, such that $E$ is simple. Such a pair 
$(E,\,\varphi)$ is called a \emph{Higgs bundle}, and $\varphi$ is called its \emph{Higgs field}
on $E$. Denote by 
$\SN^{\svb}(X,r,\xi)$ the substack of pairs $(E,\,\varphi)$ with $E$ being a stable vector bundle. Then 
we have a natural morphism
\begin{equation}\label{eq:svb}
\SN^{\svb}(X,r,\xi) \ \longrightarrow\ T^*M^{\svb}(X,r,\xi).
\end{equation}

A Higgs bundle $(E,\varphi)$ is said to be \emph{semistable} (respectively, \emph{stable}) if for any 
proper subbundle $0\,\ne\, F\,\subsetneq\, E$, such that $\varphi(F)\,\subseteq\, F\otimes K_X$, we have 
$$\frac{\deg(F)}{\rk(F)}\, \,\le\, \,\frac{\deg(E)}{\rk(E)} \ \ \ \, (\text{ respectively, }<).$$

% Denote by $\SN^s(X,r,\xi)$ the substack of stable Higgs bundles in
% $\SN^{\simp}(X,r,\xi)$, and let $N^s(X,r,\xi)$ 
% denote the moduli space of stable Higgs bundles on $X$ of rank $r$ and
% determinant $\xi$. Then we have a map 
% $$\SN^s(X,r,\xi) \,\longrightarrow\, N^s(X,r,\xi).$$
% Notice that this map is possibly non-surjective, because
% $\SN^s(X,r,\xi)$ is the stack 
% of stable Higgs bundles with simple underlying bundle, and not the 
% entire stack of stable Higgs bundles.

Let $N(X,r,\xi)$ (respectively, $N^s(X,r,\xi)$) denote the moduli
space of semistable (respectively, stable) Higgs bundles
on $X$. We will use the following 
lemma which is a consequence of the proof of \cite[Theorem II.6.(iii)]{Fa93} or \cite[Proposition 
5.4]{BGL11}.

\begin{lemma}
\label{lemma:codimension}
Let $X$ be a curve of genus $g\,\ge\, 2$, and suppose that $r\,\ge\, 2$. Then the
codimension of $N(X,r,\xi) \backslash T^*M^{\svb}(X,r,\xi)$ in $N(X,r,\xi)$ is at least $(g-1)(r-1)$. In
particular, if $$(g,\,r)\ \not\in\ \{(2,\,2),\, (2,\,3),\, (3,\,2)\},$$ then this codimension is at least 3.
\end{lemma}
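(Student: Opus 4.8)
The plan is to reduce the statement to an upper bound on the dimension of the ``bad'' locus and then to estimate that dimension by stratifying according to the instability of the underlying vector bundle. First I would identify the subset $T^*M^{\svb}(X,r,\xi)\subset N(X,r,\xi)$ precisely: over a stable vector bundle $E$ the fibre of \eqref{eq:svb} is $H^0(X,\End_0(E)\otimes K_X)$, which by Serre duality is the cotangent space $T^*_{[E]}M^{\svb}(X,r,\xi)$, so $T^*M^{\svb}(X,r,\xi)$ is exactly the open sublocus of $N(X,r,\xi)$ consisting of those semistable Higgs bundles $(E,\varphi)$ whose underlying bundle $E$ is a \emph{stable} vector bundle. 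Hence the complement $\mathcal Z:=N(X,r,\xi)\setminus T^*M^{\svb}(X,r,\xi)$ is the locus where $E$ fails to be stable, and since $\dim N(X,r,\xi)=2(r^2-1)(g-1)$ it suffices to prove $\dim\mathcal Z\le 2(r^2-1)(g-1)-(g-1)(r-1)$.

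Next I would stratify $\mathcal Z$ by the Harder--Narasimhan type of $E$ (refining the strictly semistable locus by a Jordan--H\"older subbundle of the same slope). Fix such a type, recorded by a destabilizing subbundle $0\ne F\subsetneq E$ with $\mu(F)\ge\mu(E)$. The essential point is that, because $(E,\varphi)$ is a \emph{semistable Higgs} bundle, $F$ cannot be $\varphi$-invariant; equivalently the off-diagonal component $\bar\varphi\colon F\to (E/F)\otimes K_X$ of the Higgs field is nonzero. I would then count the dimension of the corresponding stratum of $\mathcal Z$ as the sum of three contributions: the dimension of the moduli of the graded pieces (subject to the determinant constraint $\det F\otimes\det(E/F)\cong\xi$), the dimension $\dim\Ext^1(E/F,F)$ of extension classes realizing $E$, and the dimension of the admissible Higgs fields $\varphi$. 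Each term is computed by Riemann--Roch; the gain in moduli dimension coming from allowing $E$ to degenerate is measured by the Shatz codimension $\sum_{i<j}\big(r_jd_i-r_id_j+r_ir_j(g-1)\big)$ of the stratum inside the stack of all bundles, while the forced constraints on the filtration-preserving part of $\varphi$ cut down the space of Higgs fields by a comparable amount.

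Assembling these estimates, the codimension of each stratum in $N(X,r,\xi)$ is a positive expression in the numerical type, and I would minimize it over all admissible types. The minimum turns out to be $(g-1)(r-1)$, attained at the ``least unstable'' configuration (a rank-one versus rank-$(r-1)$ splitting at the critical slope); for $r=2$ this already reproduces the value $g-1$ by a direct extension count. The bound on the codimension being at least $3$ away from the listed exceptions is then the elementary observation that $(g-1)(r-1)\ge 3$ unless $(g,r)\in\{(2,2),(2,3),(3,2)\}$.

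I expect the main obstacle to be twofold: first, showing that the codimension function is genuinely minimized at the boundary type rather than at some intermediate Harder--Narasimhan polygon, which requires a convexity/monotonicity argument in the numerical data; and second, the careful treatment of the strictly semistable stratum, where $\mu(F)=\mu(E)$ and slope comparison alone is too weak, so one must use the Higgs-stability inequality itself to control the admissible $\bar\varphi$. This is precisely the content carried out in the proof of \cite[Theorem II.6.(iii)]{Fa93} and \cite[Proposition 5.4]{BGL11}, which I would invoke to secure the uniform lower bound $(g-1)(r-1)$.
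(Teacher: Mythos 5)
Your proposal matches the paper's treatment: the paper gives no proof of this lemma at all, simply citing the proofs of \cite[Theorem II.6.(iii)]{Fa93} and \cite[Proposition 5.4]{BGL11} --- exactly the references you invoke at the end --- and your Harder--Narasimhan/Shatz stratification sketch is a faithful outline of the argument carried out there, with the correct identification of the minimizing stratum and of where $(g-1)(r-1)\ge 3$ fails. The one caveat worth keeping in mind is that your blanket claim that a destabilizing subbundle $F$ cannot be $\varphi$-invariant only holds when $\mu(F)>\mu(E)$; the strictly semistable stratum with $\mu(F)=\mu(E)$ (which is precisely where the minimal codimension $(g-1)(r-1)$ is attained) must be counted separately, a point you already flag as requiring the cited references.
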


Given a stack $\SX$, let $\Gamma(\SX)$ denote the algebra of complex algebraic functions on $\SX$, i.e., 
we have $\Gamma(\SX)\,=\,\Hom_{(\op{Stacks})}(\SX,\,\CC)\,=\, H^0(\SX,\SO_{\SX})$.

\begin{lemma}\label{le1}
The equality
$$\Gamma(\SN^{\svb}(X,r,\xi))\,=\,\Gamma(T^*M^{\svb}(X,r,\xi))$$
holds.
\end{lemma}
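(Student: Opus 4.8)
The plan is to show that the morphism \eqref{eq:svb} presents $\SN^{\svb}(X,r,\xi)$ as a gerbe over $T^*M^{\svb}(X,r,\xi)$ banded by the finite group $\mu_r$ of $r$-th roots of unity (isomorphic to $\ZZ/r\ZZ$), and then to invoke the standard fact that, in characteristic zero, passing to such a gerbe does not alter the algebra of global functions.

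First I would recall that the forgetful morphism $\SM^{\svb}(X,r,\xi)\,\longrightarrow\, M^{\svb}(X,r,\xi)$ is a $\mu_r$-gerbe: a stable bundle $E$ has automorphism group $\CC^*$, and the scalars $\lambda$ compatible with a fixed isomorphism $\det E\,\cong\,\xi$ are exactly those with $\lambda^r\,=\,1$. In particular $H^0(X,\,\End_0(E))\,=\,0$ for stable $E$, so the cotangent complex of $\SM^{\svb}(X,r,\xi)$ is concentrated in degree zero and is, by Serre duality, the locally free sheaf with fibre $H^0(X,\,\End_0(E)\otimes K_X)$ at $E$; this is exactly the sheaf whose total space is $\SN^{\svb}(X,r,\xi)$.

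Next I would identify this total space with a base change of the gerbe. Since a $\mu_r$-gerbe is \'etale over its coarse space, the cotangent sheaf of $\SM^{\svb}(X,r,\xi)$ is pulled back from $\Omega^1_{M^{\svb}(X,r,\xi)}$, so that
$$\SN^{\svb}(X,r,\xi)\;\cong\;\SM^{\svb}(X,r,\xi)\times_{M^{\svb}(X,r,\xi)} T^*M^{\svb}(X,r,\xi).$$
Being the base change of a $\mu_r$-gerbe, the morphism \eqref{eq:svb} is again a $\mu_r$-gerbe. Concretely, the stabilizer $\mu_r$ at a pair $(E,\,\varphi)$ consists of central scalars, which act trivially on $\varphi\,\in\, H^0(X,\,\End_0(E)\otimes K_X)$ by conjugation, so no cotangent directions are collapsed and the fibres of \eqref{eq:svb} are copies of $B\mu_r$.

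Finally, writing $\pi$ for the morphism \eqref{eq:svb}, I would use that $\pi_*\SO_{\SN^{\svb}(X,r,\xi)}\,=\,\SO_{T^*M^{\svb}(X,r,\xi)}$: \'etale-locally the gerbe is a product with $B\mu_r$, and $\Gamma(B\mu_r,\,\SO)\,=\,\CC$, so taking $\mu_r$-invariants returns the functions on the base, and these identifications glue by descent. Taking global sections then gives the desired equality $\Gamma(\SN^{\svb}(X,r,\xi))\,=\,\Gamma(T^*M^{\svb}(X,r,\xi))$. The step I expect to be the main obstacle is making the identification $\SN^{\svb}(X,r,\xi)\,\cong\,\SM^{\svb}(X,r,\xi)\times_{M^{\svb}(X,r,\xi)} T^*M^{\svb}(X,r,\xi)$ fully rigorous --- that is, verifying that the ``total space of the cotangent sheaf'' described before the statement really is this base change, and confirming that the residual $\mu_r$-action on the Higgs field is trivial --- after which the passage to global functions is the routine gerbe computation indicated above.
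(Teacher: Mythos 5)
Your argument is correct and is essentially the paper's own proof: both identify $\SN^{\svb}(X,r,\xi)$ with the base change $\SM^{\svb}(X,r,\xi)\times_{M^{\svb}(X,r,\xi)} T^*M^{\svb}(X,r,\xi)$ of the $\ZZ/r\ZZ$-gerbe $\SM^{\svb}(X,r,\xi)\to M^{\svb}(X,r,\xi)$, so that the morphism \eqref{eq:svb} is again a $\ZZ/r\ZZ$-gerbe (a good moduli map) and hence induces an isomorphism on global functions. The only cosmetic difference is that you justify the gerbe structure by counting automorphisms directly, while the paper deduces it from the $B\GG_m$-gerbe for unfixed determinant via the Cartesian diagram over $\mathcal{P}ic(X)$.
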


\begin{proof}
It follows immediately from the fact that the morphism \eqref{eq:svb} is a good
moduli and hence the global functions are the same.

We prove that the morphism \eqref{eq:svb} is a good moduli as follows.
The morphism $\SM^{\svb}(X,r,d)\too M^{\svb}(X,r,d)$ from the moduli stack of
stable vector bundles to its moduli space is a $B\GG_m$-gerbe, meaning
that it is locally a product $U\times B\GG_m$ where $U$ is an \'etale covering
of $M^s(X,r,d)$. Using this and the following Cartesian diagram 
(where $\mathcal{P}ic(X)$ is
the algebraic stack parametrizing line bundles)
$$
\xymatrix{
{\SM^{\svb}(X,r,\xi)} \ar[r] \ar[d] & \SM^{\svb}(X,r,d) \ar[d]\\
{\Spec \CC} \ar[r]^{\xi} & {\mathcal{P}ic(X)}
}
$$
it is easy to see that $\SM^{\svb}(X,r,\xi)\too M^{\svb}(X,r,\xi)$ is a 
$\ZZ/r\ZZ$-gerbe.
The moduli space $N^{\svb}(X,r,\xi)$ of Higgs bundles whose underlying vector bundle is
stable is actually a vector bundle over $M^{\svb}(X,r,\xi)$. 
Therefore, this Cartesian diagram
$$
\xymatrix{
{\SN^\svb(X,r,\xi)} \ar[r] \ar[d] & N^\svb(X,r,\xi) \ar[d]\\
{\SM^{\svb}(X,r,\xi)} \ar[r] & {M^{\svb}(X,r,\xi)}
}
$$
shows that 
$$
\SN^\svb(X,r,\xi) \ \too\ N^\svb(X,r,\xi)\ =\ T^*M^{\svb}(X,r,\xi)
$$ 
is a $\ZZ/r\ZZ$-gerbe, and therefore it is a good moduli.
\end{proof}

Denote the Hitchin base as $W\,:=\,\bigoplus_{k=2}^r H^0(X,\, K^{\otimes k}_X)$, and write
$W_k\,=\,H^0(X,\, K^{\otimes k}_X)$ for $k\,=\,2,\,\cdots,\,r$.
Let 
$$
H\,:\,N(X,r,\xi)\,\longrightarrow \,W
$$
be the Hitchin map. We also define the Hitchin map for the moduli
stack
$$\SH\,:\,\SN^{\simp}(X,r,\xi) \,\longrightarrow\, W$$
sending a family $(\SE,\,\Phi)$ over $X\times T$ to the map
$$\sum_{k=2}^r (-1)^k \tr(\wedge^k\Phi) \,:\,T\,\longrightarrow \,W.$$
For each $s\,=\,(s_2,\,\ldots,\,s_r)\,\in\, W\,=\,\bigoplus_{k=2}^r H^0(X,\, K_X^k)$, the equation
$$t^r+\sum_{k=2}^r s_k t^k\,=\,0$$
defines a spectral curve $X_s$ in the total space of the line bundle $T_X^*$.

\begin{lemma}\label{le2}
If $(g,\,r)\,\ne\, (2,\,2)$, then the equality
$$\Gamma(\SN^{\simp}(X,r,\xi))\ =\ \Gamma(T^*M^{\svb}(X,r,\xi))$$
holds.
\end{lemma}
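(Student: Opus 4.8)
The plan is to deduce the statement from Lemma~\ref{le1} by proving that the open immersion of $\SN^{\svb}(X,r,\xi)$ into $\SN^{\simp}(X,r,\xi)$ induces an isomorphism on global functions. Since a stable bundle is simple and stability is an open condition, $\SN^{\svb}(X,r,\xi)$ is an open substack of $\SN^{\simp}(X,r,\xi)$, and restriction yields a map
$$\Gamma(\SN^{\simp}(X,r,\xi)) \,\longrightarrow\, \Gamma(\SN^{\svb}(X,r,\xi)) \,=\, \Gamma(T^*M^{\svb}(X,r,\xi)),$$
where the displayed equality is Lemma~\ref{le1}. It therefore suffices to show that this restriction is an isomorphism, and for this I would use normality together with a codimension estimate.

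First I would check that $\SN^{\simp}(X,r,\xi)$ is smooth and irreducible, so in particular normal. Smoothness is deformation-theoretic: at a pair $(E,\varphi)$ the relevant fixed-determinant deformation complex is $\End_0(E) \to \End_0(E)\otimes K_X$, whose hypercohomology in degree $2$ is Serre-dual to its degree-$0$ hypercohomology; the latter embeds into $H^0(X,\End_0(E))$, which vanishes because $E$ is simple. Hence the obstruction space is zero and $\SN^{\simp}(X,r,\xi)$ is smooth; irreducibility follows since it is the total space of the cone of $H^0(\End_0(-)\otimes K_X)$ over the irreducible stack $\SM^{\simp}(X,r,\xi)$, in which $\SN^{\svb}(X,r,\xi)$ is a nonempty, hence dense, open. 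Injectivity of the restriction map is then immediate: a regular function vanishing on the dense open $\SN^{\svb}(X,r,\xi)$ vanishes on the reduced stack $\SN^{\simp}(X,r,\xi)$.

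For surjectivity I would bound the complement. The closed substack $\SN^{\simp}(X,r,\xi)\setminus\SN^{\svb}(X,r,\xi)$ is exactly the locus of Higgs pairs whose underlying bundle is simple but not stable, and the codimension estimate proved in \cite{Fa93, BGL11} that underlies Lemma~\ref{lemma:codimension} bounds its codimension below by $(g-1)(r-1)$. For $g,r \ge 2$ this product is at least $1$, with equality precisely when $(g,r)=(2,2)$; thus, under the hypothesis $(g,r)\ne(2,2)$, the complement has codimension at least $2$. Since $\SN^{\simp}(X,r,\xi)$ is normal, algebraic Hartogs extension then shows that every function on $\SN^{\svb}(X,r,\xi)$ extends uniquely to $\SN^{\simp}(X,r,\xi)$, so the restriction map is also surjective; combined with Lemma~\ref{le1} this proves the claim. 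The main obstacle is the codimension step: Lemma~\ref{lemma:codimension} is phrased for the moduli space $N(X,r,\xi)$ of semistable Higgs bundles, so one must verify that the same bound governs the non-stable locus inside the stack $\SN^{\simp}(X,r,\xi)$ --- in particular controlling the growth of $h^0(\End_0(E)\otimes K_X)$ along the non-stable bundles and the part of $\SN^{\simp}$ lying over Higgs-unstable pairs --- and one must justify the Hartogs argument on a smooth algebraic stack, most cleanly by descending along the $\mu_r$-gerbe $\SN^{\simp}(X,r,\xi) \to N^{\simp}(X,r,\xi)$ to its coarse space, where global functions agree.
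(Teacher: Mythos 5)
Your overall strategy --- treat $\SN^{\svb}(X,r,\xi)\subset\SN^{\simp}(X,r,\xi)$ as a dense open immersion, get injectivity of restriction from integrality, and get surjectivity from normality plus a codimension-two bound on the complement --- is a genuinely different route from the paper's, and it stalls precisely at the step you yourself flag as ``the main obstacle''. Lemma \ref{lemma:codimension} bounds the codimension of the non-stable locus inside the moduli space $N(X,r,\xi)$ of \emph{semistable Higgs bundles}; it says nothing about the codimension of $\SN^{\simp}(X,r,\xi)\setminus\SN^{\svb}(X,r,\xi)$ inside $\SN^{\simp}(X,r,\xi)$, which is the bound your Hartogs argument actually needs. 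Since $h^0(X,\End_0(E)\otimes K_X)$ is constant on the simple locus (Serre duality plus $h^0(X,\End_0(E))=0$), $\SN^{\simp}(X,r,\xi)$ is a vector bundle over $\SM^{\simp}(X,r,\xi)$, so the required bound is equivalent to: the locus of simple but non-stable bundles has codimension at least $2$ in $\SM^{\simp}(X,r,\xi)$. That statement is true --- the Harder--Narasimhan strata and the strictly semistable stratum of the stack of bundles have codimension at least $r_1r_2(g-1)\ge(r-1)(g-1)$ --- but it is a statement about the stack of bundles, not about the Higgs moduli space, and it needs its own stratification count rather than a citation of \cite{Fa93,BGL11}. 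As written, the surjectivity half of your argument is therefore incomplete.

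For comparison, the paper's proof sidesteps this entirely: it applies Hartogs where Lemma \ref{lemma:codimension} actually lives, namely on $N(X,r,\xi)$, to get $\Gamma(T^*M^{\svb}(X,r,\xi))=\Gamma(N(X,r,\xi))$; it then invokes Hitchin's theorem \cite{Hi} that $\Gamma(N(X,r,\xi))$ is generated by the components of the Hitchin map; and since those components are defined tautologically on any family of Higgs bundles, they extend to $\SN^{\simp}(X,r,\xi)$ with no geometric input about the complement. Uniqueness of the extension uses only integrality of $\SN^{\simp}(X,r,\xi)$, exactly as in your injectivity step. If you want to keep your more direct route, you must supply the codimension count for $\SM^{\simp}(X,r,\xi)\setminus\SM^{\svb}(X,r,\xi)$ and a word on why Hartogs applies on the smooth stack (pass to a smooth atlas, or to the algebraic space underlying the gerbe, as you suggest); with those two additions your argument closes.
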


\begin{proof}
We will show that the restriction of global functions from $\SN^{\simp}(X,r,\xi)$ to 
$\SN^{\svb}(X,r,\xi)$ is an isomorphism when $(g,\,r)\,\ne\, (2,\,2)$, 
and then the result follows from Lemma \ref{le1}.

The first step is to show that any global function on $\SN^{\svb}(X,r,\xi)$ 
can be extended to $\SN^{\simp}(X,r,\xi)$. By Lemma
\ref{lemma:codimension}, the codimension of the complement of
$T^*M^{\svb}(X,r,\xi)$ in $N(X,r,\xi)$ is at least $2$, so Hartogs'
Theorem implies that
$\Gamma(T^*M^{\svb}(X,r,\xi))\,=\,\Gamma(N(X,r,\xi))$. The algebra 
of functions $\Gamma(N(X,r,\xi))$ is
generated by components of the Hitchin map \cite{Hi}, so the
algebra of functions on $T^*M^{\svb}(X,r,\xi)$ is also generated by
the components of the Hitchin map. Using Lemma \ref{le1} we conclude
that the algebra of global functions on $\SN^{\svb}(X,r,\xi)$ is 
generated by the components of the Hitchin map.
These functions are clearly well
defined over arbitrary families of Higgs fields over vector bundles
on $X$, so they extend to algebraic functions on
$\SN^{\simp}(X,r,\xi)$.

Finally, the extensions are unique because 
$\SN^{\simp}(X,r,\xi)$ is integral. Indeed, it is a vector bundle over 
$\SM^{\svb}(X,r,\xi)$, which is integral because it is actually an open substack
of the integral stack $\SM(X,r,\xi)$.
\end{proof}

\begin{corollary}\label{cor:recoverHitchin}
There exists an algebraic isomorphism $$\Spec(\Gamma(\SN^{\simp}(X,r,\xi)))\,\stackrel{\cong}{\longrightarrow}\, W$$
such that the composition of maps
$$\SN^{\simp}(X,r,\xi) \,\longrightarrow \,\Spec(\Gamma(\SN^{\simp}(X,r,\xi)))\,\stackrel{\cong}{\longrightarrow}\, W$$
coincides with the Hitchin map $\SH\,:\,\SN^{\simp}(X,r,\xi) \,\longrightarrow\, W.$
\end{corollary}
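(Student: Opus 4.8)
The plan is to construct the isomorphism directly from the Hitchin map on the moduli space of Higgs bundles and then transport it to the stack using the identification of function algebras from Lemma \ref{le2}. Combining Lemmas \ref{le1} and \ref{le2} with Hartogs' theorem (exactly as in the proof of Lemma \ref{le2}) gives
$$\Gamma(\SN^{\simp}(X,r,\xi)) \,=\, \Gamma(T^*M^{\svb}(X,r,\xi)) \,=\, \Gamma(N(X,r,\xi)),$$
so it suffices to produce a natural isomorphism $\Spec(\Gamma(N(X,r,\xi)))\,\cong\, W$ which is compatible with the Hitchin maps.

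Next I would analyze the pullback homomorphism $H^*\colon \Gamma(W)\,\to\,\Gamma(N(X,r,\xi))$ induced by the Hitchin map $H\colon N(X,r,\xi)\,\to\, W$. Since $\Gamma(W)$ is the algebra of polynomial functions on the affine space $W$, the map $H^*$ sends the linear coordinate functions to the components of $H$. Hitchin's theorem \cite{Hi}, already invoked in the proof of Lemma \ref{le2}, says precisely that these components generate $\Gamma(N(X,r,\xi))$, so $H^*$ is surjective. On the other hand, the Hitchin map is dominant onto the irreducible variety $W$, whence $H^*$ is injective. Therefore $H^*$ is an isomorphism of algebras and yields the required isomorphism $\Spec(\Gamma(N(X,r,\xi)))\,\cong\, W$, which via the identification above becomes $\Spec(\Gamma(\SN^{\simp}(X,r,\xi)))\,\cong\, W$.

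It remains to match this with the stack Hitchin map $\SH$. Because $W$ is affine, $\SH\colon \SN^{\simp}(X,r,\xi)\,\to\, W$ factors uniquely through the universal morphism $\SN^{\simp}(X,r,\xi)\,\to\,\Spec(\Gamma(\SN^{\simp}(X,r,\xi)))$, the second arrow being induced by $\SH^*\colon \Gamma(W)\,\to\,\Gamma(\SN^{\simp}(X,r,\xi))$. I would then verify that under the identification of Lemma \ref{le2} the homomorphism $\SH^*$ coincides with $H^*$: the restriction of $\SH$ to the substack $\SN^{\svb}(X,r,\xi)$ agrees with $H$ composed with the gerbe projection $\SN^{\svb}(X,r,\xi)\,\to\, N^{\svb}(X,r,\xi)\,\subset\, N(X,r,\xi)$, and the restriction of functions is exactly the identification used in Lemma \ref{le2}. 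Consequently the second arrow above is the isomorphism of the preceding paragraph, and the composite is $\SH$ by construction.

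The only substantial input is the cited result of Hitchin \cite{Hi}: that the components of the Hitchin map generate, and are algebraically independent in, the ring of global functions, so that $H$ exhibits $W$ as the affinization of the Higgs moduli space. Granting that, the remainder is a formal manipulation with the universal property of $\Spec\Gamma(-)$ for stacks; the one point requiring care is checking the agreement $\SH^*\,=\,H^*$ through the gerbe, since this is what guarantees that the factorization of $\SH$ is the very isomorphism produced from $H^*$.
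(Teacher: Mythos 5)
Your proposal is correct and follows the same route the paper intends: the corollary is stated without proof precisely because it is the combination of Lemma \ref{le2} (identifying $\Gamma(\SN^{\simp}(X,r,\xi))$ with $\Gamma(N(X,r,\xi))$) with Hitchin's theorem that the components of $H$ generate this algebra freely, so that $W$ is the affinization. Your added details --- injectivity of $H^*$ from dominance of the Hitchin map, and the check that $\SH^*=H^*$ through the gerbe on the stable locus --- are exactly the points left implicit in the paper and are handled correctly.
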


Let $\SD\,\subset\, W$ denote the discriminant locus, i.e., the locus of all $s\,=\,(s_i)\,\in\, W$ such that the
corresponding spectral curve $X_s\,\subset \,\op{Tot}(T_X^*)$ is singular.

\begin{lemma}\label{lemma:pushforwardSmooth}
Let $(E,\,\varphi)$ be a Higgs bundle whose spectral curve is integral. Then $(E,\,\varphi)$ does not have any nontrivial invariant subbundle and, in particular, it is a stable Higgs bundle.
\end{lemma}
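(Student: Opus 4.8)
The plan is to reduce the statement to an elementary fact in linear algebra over the function field of $X$ by restricting everything to the generic point. Write $K\,=\,\CC(X)$ for the function field, let $\eta$ denote the generic point of $X$, and let $V\,=\,E_\eta$ be the generic fiber of $E$, a $K$-vector space of dimension $r$. Since $K_X$ is a line bundle, a choice of local generator of $K_X$ near $\eta$ identifies $(K_X)_\eta$ with $K$, and under this identification the Higgs field $\varphi$ induces a $K$-linear endomorphism $\Phi\,:\,V\,\longrightarrow\, V$. The first step is to observe that the characteristic polynomial $p(t)\,=\,\det(t\cdot\id_V-\Phi)\,\in\, K[t]$ of $\Phi$ is, up to the chosen trivialization, exactly the polynomial cutting out the spectral curve $X_s$ read at the generic point; its coefficients are the $s_k\,=\,(-1)^k\tr(\wedge^k\varphi)$ evaluated at $\eta$, and it has degree $r$.

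Next I would translate integrality of $X_s$ into irreducibility of $p$. The finite flat morphism $\pi\,:\,X_s\,\longrightarrow\, X$ of degree $r$ has generic fiber $\Spec(K[t]/(p(t)))$; if $X_s$ is integral, its unique generic point dominates $\eta$, and the stalk of $\SO_{X_s}$ there is the function field $\CC(X_s)$, a field containing $K$ and generated by $t$ subject to $p(t)\,=\,0$. Comparing degrees, $[\CC(X_s):K]\,=\,r\,=\,\deg p$, so the minimal polynomial of $t$ over $K$ must be $p$ itself; hence $p$ is irreducible over $K$.

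The key linear-algebra step then runs as follows. Since $p$ is irreducible, the quotient $L\,:=\,K[t]/(p(t))$ is a degree-$r$ field extension of $K$, and Cayley--Hamilton makes $V$ into an $L$-module via $t\cdot v\,=\,\Phi(v)$. Counting dimensions over $K$ gives $\dim_K V\,=\,r\,=\,[L:K]$, so $V$ is a one-dimensional $L$-vector space. A $\Phi$-invariant $K$-subspace of $V$ is precisely an $L$-subspace, and a one-dimensional $L$-vector space has only the subspaces $0$ and $V$.

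Finally I would transfer this back to subbundles. Any nonzero invariant subsheaf has invariant saturation, so it suffices to rule out a proper invariant subbundle $F\,\subseteq\, E$, i.e.\ a saturated subsheaf with $\varphi(F)\,\subseteq\, F\otimes K_X$; such an $F$ restricts at $\eta$ to a $\Phi$-invariant subspace $F_\eta\,\subseteq\, V$, which by the previous step is $0$ or $V$. As $F$ is saturated it is determined by its generic rank, so $F\,=\,0$ or $F\,=\,E$, proving that $(E,\varphi)$ has no nontrivial invariant subbundle. In particular there are no proper invariant subbundles to test, so the stability inequality holds vacuously and $(E,\varphi)$ is stable. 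I expect the only delicate point to be the passage from integrality of the total space $X_s$ to irreducibility of $p$ over $K$ — one must verify that the unique generic point of $X_s$ dominates $X$ and identify $p$ with the minimal polynomial of $t$ via the degree count — while the remaining steps are routine.
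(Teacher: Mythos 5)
Your proof is correct, and it packages the argument differently from the paper. The paper argues directly with sub-Higgs-bundles: if $F$ is $\varphi$-invariant, the characteristic polynomial of $\varphi|_F$ divides that of $\varphi$, so the spectral curve of $(F,\varphi|_F)$ is a closed subscheme of the integral curve $X_s$, hence all of $X_s$, forcing $\rk(F)=\rk(E)$. You instead localize at the generic point, prove that integrality of $X_s$ is equivalent to irreducibility of the characteristic polynomial over $K=\CC(X)$, and then observe that $E_\eta$ is a one-dimensional vector space over the field $L=K[t]/(p)$, so it has no nontrivial $\Phi$-invariant subspaces; saturation transfers this back to subbundles. The underlying mechanism (integrality kills nontrivial monic factors of the characteristic polynomial) is the same, but your version is more elementary and self-contained: it replaces the paper's geometric step about closed subschemes of $X_s$ with linear algebra over $L$, and it supplies the justification --- via the degree count $[\CC(X_s):K]=r=\deg p$ --- for the passage from integrality of the total space to irreducibility of $p$, which the paper leaves implicit. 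The paper's phrasing, on the other hand, is shorter and stays closer to the spectral correspondence (pushforward of a rank one sheaf on $X_s$) that is used elsewhere in Section 2. Both proofs establish the slightly stronger conclusion actually stated, namely that there are no nontrivial invariant subbundles at all, so stability holds vacuously.
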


\begin{proof}
Let $X_s$ be the spectral curve associated to $(E,\,\varphi)$. Then $(E,\,\varphi)$ is the pushforward of
a rank 1 torsion-free sheaf $L$ on $X_s$. Assume that $F$ is a nonzero subbundle preserved by $\varphi$. Since $F$ is
invariant, the characteristic polynomial of the restriction $\varphi|_F\,:\,F\,
\longrightarrow\, F\otimes K_X$ divides the characteristic polynomial of $\varphi$.
Consequently, the spectral curve associated to $(F,\,\varphi|_F)$ is a closed
subscheme of $X_s$. Since $X_s$ is integral, the spectral curve for $(F,\,\varphi|_F)$ must be the
entire $X_s$. But this implies that $\rk(F)\,=\,\rk(E)$ and, thus, we have $F\,=\,E$.
\end{proof}

\begin{lemma}\label{lemma:recoverDiscriminant1}
Let $\gamma\,: \,\PP^1\,\longrightarrow \,\SN^{\simp}(X,r,\xi)$ be a map whose image contains at least two non-isomorphic points. Then
the image of $\SH\circ \gamma$ is a point in the discriminant locus $\SD$.
\end{lemma}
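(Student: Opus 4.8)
The plan is to analyze what a nonconstant map $\gamma : \PP^1 \to \SN^{\simp}(X,r,\xi)$ forces on the composition $\SH \circ \gamma$, using the rigidity of the Hitchin base $W$ combined with Lemma \ref{lemma:pushforwardSmooth}. First I would observe that $W$ is an affine space (a direct sum of vector spaces $H^0(X, K_X^{\otimes k})$), and by Corollary \ref{cor:recoverHitchin} the map $\SH$ factors through the affinization $\Spec(\Gamma(\SN^{\simp}(X,r,\xi)))$. Since $\PP^1$ is a complete connected variety, every global algebraic function on $\SN^{\simp}(X,r,\xi)$ pulls back along $\gamma$ to a regular function on $\PP^1$, which must be constant. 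Because the coordinates of $W$ are precisely such global functions (again by Corollary \ref{cor:recoverHitchin}), it follows that $\SH \circ \gamma$ is constant, so its image is a single point $s \in W$.

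The next step is to show that this point $s$ lies in the discriminant locus $\SD$, i.e. that the spectral curve $X_s$ is singular. I would argue by contradiction using the contrapositive of Lemma \ref{lemma:pushforwardSmooth}: if $s \notin \SD$, then $X_s$ is smooth, hence in particular integral, so every Higgs bundle in the fiber $\SH^{-1}(s)$ is stable with no nontrivial invariant subbundle. The key point is that such a fiber consists of stable Higgs bundles, and a family of stable objects over $\PP^1$ whose Hitchin image is constant should be constant up to isomorphism. Concretely, the fiber of the Hitchin map over an integral (in particular smooth) spectral curve $X_s$ is the compactified Jacobian (or Jacobian, in the smooth case) parametrizing rank $1$ torsion-free sheaves of the appropriate degree on $X_s$; this is an abelian variety, hence contains no rational curves. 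Therefore a nonconstant $\gamma$ cannot have image inside a single such fiber, contradicting the hypothesis that $\gamma$ hits at least two non-isomorphic points while $\SH \circ \gamma$ is constant at a point outside $\SD$.

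To make the second step precise I would spell out the identification of the fiber: over $s \notin \SD$ the spectral curve $X_s$ is smooth of a fixed genus, and pushforward along the finite spectral cover gives an isomorphism between $\SH^{-1}(s)$ (as a substack of Higgs bundles with fixed determinant) and a torsor under the Jacobian $\Jac(X_s)$, cut out by the determinant condition. Since $\Jac(X_s)$ is an abelian variety, any morphism $\PP^1 \to \Jac(X_s)$ is constant, and the same holds for a torsor; hence $\gamma$ would be constant, contradicting the assumption that its image contains two non-isomorphic points. This shows $X_s$ cannot be smooth, and more carefully that it cannot be integral either — so $s \in \SD$.

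The main obstacle I expect is the careful treatment of the fiber at the level of \emph{stacks} rather than coarse moduli spaces, since $\gamma$ maps to $\SN^{\simp}$ and two points of $\PP^1$ could a priori map to isomorphic Higgs bundles that are distinguished only by their automorphisms or by the gerbe structure. The hypothesis explicitly requires two \emph{non-isomorphic} points in the image, which is exactly what rules out a $\gamma$ landing in a single isomorphism class; I would need to check that non-isomorphic points of the stack map to distinct points of the Jacobian torsor, i.e. that over an integral spectral curve the coarse map from $\SH^{-1}(s)$ to the compactified Jacobian separates non-isomorphic stable Higgs bundles. This is where Lemma \ref{lemma:pushforwardSmooth} does the real work: integrality of $X_s$ guarantees stability and the absence of invariant subbundles, so the pushforward correspondence is a genuine bijection on isomorphism classes, and the rational-curve-free geometry of the abelian variety then delivers the contradiction.
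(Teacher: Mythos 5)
Your proposal is correct and follows essentially the same route as the paper: constancy of $\SH\circ\gamma$ because $W$ is affine and $\PP^1$ is complete, then (via Lemma \ref{lemma:pushforwardSmooth}) the fiber over a point $s\,\notin\,\SD$ is an abelian variety --- the paper identifies it as the Prym variety of the spectral cover, i.e.\ a torsor under the Prym rather than under all of $\Jac(X_s)$, a minor imprecision in your write-up --- which admits no nonconstant maps from $\PP^1$, forcing all points in the image of $\gamma$ to lie in a single isomorphism class and contradicting the hypothesis. The only genuine slip is your closing aside that $X_s$ ``cannot be integral either'': integral nodal spectral curves have compactified Jacobians that \emph{do} contain rational curves (this is precisely the mechanism of Lemma \ref{lemma:P1Discriminant}), but this does not affect your proof, since $\SD$ is by definition the locus of singular spectral curves, so ruling out smoothness alone already yields $s\,\in\,\SD$.
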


\begin{proof}
First of all, as $\SH\circ\gamma \,: \,\PP^1\,\longrightarrow\, W$ is a map from $\PP^1$ to an affine space, its image must be a point
$s\,\in\, W$. Suppose that $$s\, \notin\, \SD .$$ Then the curve $X_s$
is smooth. By Lemma \ref{lemma:pushforwardSmooth}, the map $\gamma$
factors though 
the substack
$\SN'\,\hookrightarrow \,\SN^{\simp}(X,r,\xi)$ of Higgs bundles
$(E,\varphi)$ such that $(E,\varphi)$ is stable and $E$ is simple. 
Furthermore, the composition $\SH\circ \gamma$ factors through the moduli 
scheme of stable Higgs bundles: 
$$
\PP^1 \,\stackrel{\gamma}{\longrightarrow}\, \SN' \,\longrightarrow
\,N^s(X,r,\xi) \,\longrightarrow\, W.
$$ 
The preimage of $s\,\in\, W$ 
in $N^s(X,r,\xi)$ is isomorphic to the Prym variety of line bundles over $X_s$ whose pushforward has determinant $\xi$, so it is an abelian 
variety. Since there is no nonconstant map from $\PP^1$ to an abelian variety,
the image of $\gamma$ in the moduli space $N^s(X,r,\xi)$ is a single point. Thus, all the points in the image of 
$\gamma$ in the stack $\SN'$ must be isomorphic. This contradicts the hypothesis that its image contains at least two non-isomorphic 
points. This completes the proof of the lemma.
\end{proof}

\begin{lemma}\label{lemma:P1Discriminant}
Assume that $g,\, r\,\ge\, 2$, and $(g,\,r)\,\not\in\, \{(2,\,2),\,(2,\,3),\,(3,\,2)\}$. For a general
point $s\,\in\, \SD$ there exists a non-constant morphism (given 
explicitly in the proof bellow) 
$$
\gamma'\,:\,\PP^1 \,\longrightarrow \,
T^*M^{\svb}(X,r,\xi)
$$ 
such that $\im(H\circ \gamma')\,=\,s$.
\end{lemma}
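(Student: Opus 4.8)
The plan is to realize the rational curve inside a single Hitchin fiber, using that over a general point of the discriminant this fiber is the compactified Prym variety of a nodal spectral curve and is therefore swept out by rational curves that $H$ contracts to a point. First I would pin down the general point $s\in\SD$: since $\SD$ is exactly the locus where the spectral curve degenerates, for general $s$ the curve $X_s$ is integral with a single node as its only singularity. Let $\pi\colon X_s\to X$ be the spectral cover, let $\nu\colon\widetilde{X}_s\to X_s$ be the normalization, and let $\{x_1,x_2\}=\nu^{-1}(\text{node})$.

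By the spectral correspondence, the fiber $H^{-1}(s)$ is identified with the variety of rank-one torsion-free sheaves $L$ on $X_s$ whose pushforward $\pi_*L$ has determinant $\xi$. For an integral one-nodal curve this compactified Prym variety admits a surjection onto the abelian variety $P$ of line bundles on $\widetilde X_s$ satisfying the analogous determinant condition, sending a line bundle $L$ to $\nu^*L$. The fibers of this surjection are nodal rational curves: each is obtained by compactifying the $\GG_m$ of gluing data of the two branches at the node and then identifying the two limits, both of which equal the non-locally-free sheaf $\nu_*M$. A dimension count gives $\dim H^{-1}(s)=\dim W$ and $\dim P=\dim W-1$, so these rational curves move in a family of the expected dimension and exactly one passes through each point of the fiber.

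Next I would write $\gamma'$ down explicitly. Fix a general $[M]\in P$ and let $\ell_M\subset H^{-1}(s)$ be the nodal rational curve lying over it; its normalization is a $\PP^1$ parametrizing the gluing parameter. Pushing the corresponding family of torsion-free sheaves forward along $\pi$ produces a family of rank-$r$ Higgs bundles $(E_\lambda,\varphi_\lambda)$ on $X$, hence a morphism $\gamma'\colon\PP^1\to N(X,r,\xi)$ with $H\circ\gamma'$ constant equal to $s$. Since $X_s$ is integral, Lemma \ref{lemma:pushforwardSmooth} guarantees that every $(E_\lambda,\varphi_\lambda)$ is a stable Higgs bundle, and the two limit sheaves $\nu_*M$ still push forward to honest vector bundles on $X$, so $\gamma'$ extends over all of $\PP^1$. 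It is non-constant because distinct gluing parameters give non-isomorphic line bundles on $X_s$, and the spectral correspondence is injective on isomorphism classes.

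The remaining and hardest step is to force the image into $T^*M^{\svb}(X,r,\xi)=N^{\svb}(X,r,\xi)$, that is, to make the underlying bundles $E_\lambda$ stable as vector bundles for every $\lambda$, not merely stable as Higgs bundles. Let $B\subset N(X,r,\xi)$ be the locus where the underlying bundle fails to be stable. By Lemma \ref{lemma:codimension} together with the hypothesis $(g,r)\notin\{(2,2),(2,3),(3,2)\}$, $B$ has codimension at least $(g-1)(r-1)\ge 3$ in $N(X,r,\xi)$. Transferring this bound to the fiber through flatness of the Hitchin map, one finds that $B\cap H^{-1}(s)$ has codimension at least $2$ in $H^{-1}(s)$, hence dimension strictly less than $\dim P$. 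Because the curves $\ell_M$ sweep out the whole fiber with exactly one through each point, the set of $[M]\in P$ for which $\ell_M$ meets $B$ is then a proper closed subset of $P$; for $[M]$ outside it the entire $\ell_M$ lies in $N^{\svb}$, and its normalization is the desired $\gamma'$. I expect the main obstacle to be precisely this codimension transfer: one must check that $B$ meets the fiber over a general point of the discriminant locus in the expected codimension, uniformly enough that a whole rational curve, and not just its generic point, avoids $B$. This is exactly the point at which the excluded low-dimensional cases $(2,2)$, $(2,3)$, $(3,2)$ would break down, since there the codimension estimate of Lemma \ref{lemma:codimension} drops below what the argument needs.
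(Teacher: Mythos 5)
Your proposal is correct and follows essentially the same route as the paper: a one--nodal integral spectral curve for general $s\in\SD$, the description of $H^{-1}(s)$ as a fibration in nodal rational curves (gluing data at the node) over the Prym of the normalization, and the codimension bound of Lemma \ref{lemma:codimension} transferred to the fiber via equidimensionality of the Hitchin map to find a rational curve avoiding the non-stable locus. The only cosmetic difference is that the paper first intersects the bad locus with the divisor $H^{-1}(\SD)$ and then invokes Faltings' equidimensionality on $N^s(X,r,\xi)$, where you appeal directly to flatness over $\SD$; the dimension count is identical.
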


\begin{proof}
We can follow the same proof as in \cite[Proposition 3.1]{BGM13}, incorporating the codimension bound given
by Lemma \ref{lemma:codimension}. By \cite[Remark 1.7]{KP95}, there exists a Zariski open subset $\SD^0\,
\subset \, \SD$ such that each point $s\,\in\, \SD^0$ corresponds to a spectral curve $X_s$ which is an
irreducible nodal curve with a single node. Moreover, as a consequence of \cite[Proposition 3.2]{BGM13},
the Hitchin discriminant $\SD$ is irreducible, so the open subset $\SD^0$ is actually dense.
Let $$H\ :\ N(X,r,\xi)\ \longrightarrow \ W$$
denote the Hitchin map for the moduli space of semistable Higgs bundles.

Let $\pi\,:\,X_s\,\longrightarrow\, X$ be the projection from the spectral curve.
The fiber $H^{-1}(s)$ parametrizes torsion free sheaves $L$ on $X_s$ such that
$\pi_*L$ is a vector bundle on $X$ of determinant $\xi$. 
Torsion Free sheaves of rank 1 on nodal curves have been studied by Usha Bhosle, 
using the notion of generalized parabolic bundles. We will now recall the results
 that we will need (for details, see the proof of \cite[Proposition 2.2]{Bh92}). 
The results of Bhosle show that the fiber $H^{-1}(s)$ is a fibration over a closed subscheme of
the Jacobian $J(\widetilde{X}_s)$ of the normalization 
$p\,:\,\widetilde{X}_s\,\longrightarrow\, X_s$ 
with fiber isomorphic to a rational curve with one node.
To see this, we first consider a line bundle on $X_s$. It can be described
by a line bundle $\widetilde{L}\,\in\, J(\widetilde{X}_s)$ and an isomorphism between the 
fibers over the two points $x_1$,\, $x_2\,\in\, \widetilde{X}_s$ 
mapping to the node of $X_s$. This isomorphism
can be given by its graph 
$\Gamma\,\subset\, \widetilde{L}_{x_1}\oplus \widetilde{L}_{x_2}$. The 
corresponding line bundle $L$ on $X_s$ fits in a short exact sequence
$$
0 \,\too\, L_{\Gamma} \,\too\, p_*{\widetilde{L}} \,\too\, 
(\widetilde{L}_{x_1}\oplus \widetilde{L}_{x_2})/\Gamma \,\too\, 0.
$$
Note that $\Gamma$ is a line in $\widetilde{L}_{x_1}\oplus \widetilde{L}_{x_2}$ which projects
isomorphically to both $\widetilde{L}_{x_1}$ and $\widetilde{L}_{x_2}$. If we allow $\Gamma$ to
become $\widetilde{L}_{x_1}$ or $\widetilde{L}_{x_2}$, then $L_\Gamma$ is no longer a line bundle, but
it is torsion free. 

We thus obtain, for each $s\in \SD^0$ and line bundle $\widetilde{L}$ on $\widetilde{X}_s$, 
a family of torsion free sheaves on $X_s$ parametrized by 
$\PP^1=\PP(\widetilde{L}_{x_1}\oplus \widetilde{L}_{x_2})$
$$
0 \,\too\, \SL \,\too\, p^*_{X_s} p_* \widetilde{L} 
\,\too\, \SO_{x_0 \times \PP^1}(1)\,\too\, 0,
$$
where $p_{X_s}$ is the projection of $X_s\times \PP^1$ to the first factor.
As we vary over all possible line bundles on $\widetilde{X_s}$ and
points in $\PP(\widetilde{L}_{x_1}\oplus \widetilde{L}_{x_2})$ we obtain all possible
torsion free sheaves on $X_s$. The condition that 
the vector bundle $\pi_*L$ on $X$ has determinant $\xi$ picks a 
closed subset of $J(\widetilde{X_s})$. Different points in 
$\PP(\widetilde{L}_{x_1}\oplus \widetilde{L}_{x_2})$ will give different isomorphic classes
of torsion free sheaves except that the points corresponding to the
two lines $\widetilde{L}_{x_1}$ and $\widetilde{L}_{x_2}$ give isomorphic torsion free sheaves.
This is the reason why $H^{-1}(s)$ is a fibration with fibers equal
to nodal rational curves.
Taking the pushforward of the previous sequence, the family $\SL$ of torsion free sheaves
becomes a family of Higgs bundles $(\SE,\Phi)$ on $X$ with $\SE$ given by
\begin{equation}\label{eq:ratline}
0\,\too\, \SE=(\pi\times \id_{\PP^1})_*\SL \,\too\,(\pi\times \id_{\PP^1})_*p^*_{X_s} p_* \widetilde{L} 
\,\too\, \SO_{\pi(x_0) \times \PP^1}(1)\,\too\, 0
\end{equation}
Furthermore, it follows from this sequence that $\det(\SE) \;\cong\; p^*_X\xi$.

Lemma
\ref{lemma:codimension} implies that the codimension of the complement of $T^*M^{\svb}(X,r,\xi)$ in
$N(X,r,\xi)$ is at least $3$, so, intersecting it with the divisor $H^{-1}(\SD)$, we obtain that the
codimension of the
complement of $H^{-1}(\SD)\cap T^*M^{\svb}(X,r,\xi)$ inside $H^{-1}(\SD)$ must be at least $2$. Since
the curves in $\SD^0$ are all integral, by Lemma \ref{lemma:pushforwardSmooth} we have
$H^{-1}(s)\,\subset\, N^s(X,r,\xi)$ for all $s$ in the dense subset $\SD^0\,\subset\, \SD$. By
\cite[Theorem II.5]{Fa93}, the restriction of the Hitchin map to $N^s(X,r,\xi)$ is equidimensional.
Thus, for a general $s\,\in \,\SD^0$, the codimension of 
the complement of $H^{-1}(s)\cap T^*M^{\svb}(X,r,\xi)$ inside $H^{-1}(s)$ is at least 2. 

As it was mentioned above, $H^{-1}(s)$ is a fibration by nodal rational curves
(dimension 1), therefore, there exists complete rational curves
$\gamma'\,:\,\PP^1 \,\longrightarrow\,H^{-1}(s)\cap T^*M^{\svb}(X,r,\xi)$.

\end{proof}

\begin{lemma}
\label{lemma:recoverDiscriminant2}
The morphism given in Lemma \ref{lemma:P1Discriminant}
can be lifted to the moduli stack, i.e., to a morphism
$$
\gamma\,:\, \PP^1 \,\longrightarrow \,\SN^{\simp}(X,r,\xi)
$$
such that the image of the composition $\SH\circ\gamma$ is the 
point $s$,
and the image of $\gamma$ contains at least two non-isomorphic points.
\end{lemma}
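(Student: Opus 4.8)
The plan is to produce the lift directly from the explicit family of Higgs bundles constructed in the proof of Lemma \ref{lemma:P1Discriminant}, rather than appealing to an abstract lifting criterion for the gerbe $\SN^{\svb}(X,r,\xi)\,\longrightarrow\, T^*M^{\svb}(X,r,\xi)$. The point is that a morphism $\PP^1\,\longrightarrow\, \SN^{\simp}(X,r,\xi)$ is by definition the same thing as a family over $X\times\PP^1$ of Higgs bundles with simple underlying bundle and a fixed-determinant structure, and such a family has already been written down.

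First I recall the explicit description of $\gamma'$. For a general $s\,\in\,\SD^0$ and a general line bundle $\widetilde{L}$ on the normalization $\widetilde{X}_s$, the exact sequence \eqref{eq:ratline} produces a family $(\SE,\,\Phi)$ of Higgs bundles on $X$ parametrized by $\PP^1\,=\,\PP(\widetilde{L}_{x_1}\oplus\widetilde{L}_{x_2})$, and $\gamma'$ is precisely the morphism into $H^{-1}(s)\cap T^*M^{\svb}(X,r,\xi)$ induced by this family. Here the genericity of $\widetilde{L}$, together with the codimension bound of Lemma \ref{lemma:codimension} (which forces the complement of $T^*M^{\svb}(X,r,\xi)$ in $H^{-1}(s)$ to have codimension at least $2$), is exactly what guarantees that the entire rational curve lands inside $T^*M^{\svb}(X,r,\xi)$; in particular the underlying vector bundle $\SE_t$ is stable, hence simple, for every $t\,\in\,\PP^1$.

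Next I upgrade this family to a morphism to the stack. By the last assertion of Lemma \ref{lemma:P1Discriminant} one has $\det(\SE)\,\cong\, p_X^*\xi$ as line bundles on $X\times\PP^1$; since $X\times\PP^1$ is proper and connected, I may fix an isomorphism $\delta\,\colon\,\det(\SE)\,\stackrel{\cong}{\longrightarrow}\, p_X^*\xi$, the choice being unique up to a global scalar. The datum $(\SE,\,\delta,\,\Phi)$ is then a family over $X\times\PP^1$ of simple Higgs bundles with fixed determinant, which is exactly the data of a morphism $\gamma\,\colon\,\PP^1\,\longrightarrow\,\SN^{\simp}(X,r,\xi)$ (factoring through the substack $\SN^{\svb}(X,r,\xi)$, by the stability noted above). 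By construction the gerbe projection $\SN^{\svb}(X,r,\xi)\,\longrightarrow\, T^*M^{\svb}(X,r,\xi)$ carries this family back to $\gamma'$, so $\gamma$ is a lift of $\gamma'$.

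It remains to verify the two required properties. For the Hitchin map, every Higgs field $\Phi_t$ in the family has characteristic coefficients defining the one fixed spectral curve $X_s$, so by the very definition of $\SH$ the composition $\SH\circ\gamma$ is the constant map $t\,\longmapsto\,\sum_{k=2}^{r}(-1)^k\tr(\wedge^k\Phi_t)\,=\,s$, consistent with $\im(H\circ\gamma')\,=\,s$ and with Corollary \ref{cor:recoverHitchin}. For the non-isomorphism statement, since $\gamma'$ is non-constant its image contains two distinct points $y_1\,\ne\, y_2$ of the coarse space $T^*M^{\svb}(X,r,\xi)$; as distinct points there correspond to non-isomorphic stable Higgs bundles, the lifts $\gamma(t_1),\,\gamma(t_2)$ mapping to $y_1,\,y_2$ cannot be isomorphic in $\SN^{\simp}(X,r,\xi)$, so the image of $\gamma$ contains at least two non-isomorphic points. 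The only genuinely delicate step is the one already settled inside Lemma \ref{lemma:P1Discriminant}, namely that the codimension estimate keeps the chosen rational curve inside $T^*M^{\svb}(X,r,\xi)$ so that the lift exists over all of $\PP^1$; granting this, both the construction of $\gamma$ and the verification of its properties are immediate.
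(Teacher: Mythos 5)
Your proposal is correct and follows essentially the same route as the paper: the paper's proof likewise takes the explicit family $(\SE,\,\Phi)$ from \eqref{eq:ratline}, uses the isomorphism $\det(\SE)\,\cong\, p_X^*\xi$ to get the morphism $\gamma$ to the stack, and deduces the two non-isomorphic points from the non-constancy of $\gamma'$. You simply spell out a few steps the paper leaves implicit (fixing the determinant trivialization, constancy of $\SH\circ\gamma$, and the passage from distinct coarse points to non-isomorphic stacky points).
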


\begin{proof}
The morphism in Lemma \ref{lemma:P1Discriminant} is given by the
explicit family $(\SE,\Phi)$ given in \eqref{eq:ratline}. There is an
isomorphism $\det(\SE) \;\cong\; p^*_X\xi$, and hence a 
morphism $\gamma$ to the moduli stack. 

By construction, the map $\gamma'$ is nonconstant, so the above morphism $\gamma$ has at least two 
non-isomorphic points in its image.
\end{proof}

\begin{corollary}
\label{cor:recoverDiscriminant}
Let $X$ be an irreducible smooth complex projective curve of genus $g\,\ge \,2$. Suppose that $r\,\ge\, 2$
and $(g,\,r)\,\not\in\, \{(2,\,2),\,(2,\,3),\,(3,\,2)\}$. Let $\Gamma$ be the
space of all maps $$\gamma\,:\,\PP^1 \,\longrightarrow\, \SN^{\simp}(X,r,\xi)$$ whose image contains
at least two non-isomorphic points. Then
the Hitchin discriminant $\SD$ is the algebraic closure of the subset
$$\SD_\Gamma\ =\ \{\im(\SH \circ \gamma) \, \,\big\vert \,\, \gamma \,\in \,\Gamma \} \ \subset\ W.$$
\end{corollary}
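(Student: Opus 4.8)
The plan is to obtain the two inclusions $\overline{\SD_\Gamma}\subseteq \SD$ and $\SD\subseteq \overline{\SD_\Gamma}$ by assembling the lemmas already established, since the substantive geometric work has been carried out in Lemmas \ref{lemma:recoverDiscriminant1}, \ref{lemma:P1Discriminant} and \ref{lemma:recoverDiscriminant2}. The corollary is then essentially a density-and-closure bookkeeping argument.

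First I would observe that the inclusion $\SD_\Gamma\subseteq \SD$ is exactly the content of Lemma \ref{lemma:recoverDiscriminant1}: every $\gamma\in\Gamma$, having by definition at least two non-isomorphic points in its image, satisfies $\im(\SH\circ\gamma)\in\SD$. Since $\SD$ is defined as the locus of $s\in W$ whose spectral curve $X_s$ is singular, it is cut out by the vanishing of a discriminant polynomial in the coordinates of $W$ and hence is a closed subvariety of $W$; therefore passing to algebraic closures preserves the inclusion and yields $\overline{\SD_\Gamma}\subseteq\SD$.

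For the reverse inclusion I would use Lemmas \ref{lemma:P1Discriminant} and \ref{lemma:recoverDiscriminant2}. For every $s$ in the dense open subset $\SD^0\subset\SD$ of single-node integral spectral curves appearing in the proof of Lemma \ref{lemma:P1Discriminant}, Lemma \ref{lemma:recoverDiscriminant2} produces a morphism $\gamma\colon\PP^1\to\SN^{\simp}(X,r,\xi)$ whose image contains at least two non-isomorphic points and satisfies $\im(\SH\circ\gamma)=s$. By definition this means $\gamma\in\Gamma$ and $s\in\SD_\Gamma$, so $\SD^0\subseteq\SD_\Gamma$. Because $\SD$ is irreducible (as recorded in the proof of Lemma \ref{lemma:P1Discriminant}, via \cite[Proposition 3.2]{BGM13}), its nonempty open subset $\SD^0$ is dense, so $\overline{\SD^0}=\SD$ and hence $\SD\subseteq\overline{\SD_\Gamma}$. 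Combining the two inclusions gives $\overline{\SD_\Gamma}=\SD$, as claimed.

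The only point requiring genuine care is that the ``general $s$'' supplied by Lemma \ref{lemma:recoverDiscriminant2} ranges over a truly dense subset of $\SD$, which is guaranteed precisely by the irreducibility of the Hitchin discriminant together with the density of $\SD^0$; without irreducibility one could only conclude that $\overline{\SD_\Gamma}$ contains one component of $\SD$. Note also that the hypothesis $(g,r)\notin\{(2,2),(2,3),(3,2)\}$ is not used directly here but enters through Lemma \ref{lemma:P1Discriminant}, whose codimension estimate (Lemma \ref{lemma:codimension}) is what makes the construction of the rational curves possible in the first place; thus no additional obstacle arises at this stage.
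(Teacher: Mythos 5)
Your proposal is correct and follows essentially the same route as the paper: Lemma \ref{lemma:recoverDiscriminant1} gives $\SD_\Gamma\subseteq\SD$ (with $\SD$ closed), and Lemmas \ref{lemma:P1Discriminant} and \ref{lemma:recoverDiscriminant2} show $\SD_\Gamma$ contains a dense subset of $\SD$, whence equality of closures. The extra details you supply (closedness of $\SD$ via the discriminant polynomial, density of $\SD^0$ via irreducibility) are accurate elaborations of what the paper leaves implicit.
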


\begin{proof}
By Lemma \ref{lemma:recoverDiscriminant1} we have $\SD_\Gamma\,\subseteq\, \SD$. Moreover, Lemma
\ref{lemma:recoverDiscriminant2} implies that $\SD_\Gamma$ contains a dense open subset of $\SD$. Therefore,
the closure of $\SD_\Gamma$ in $W$ is the entire discriminant $\SD$.
\end{proof}

\begin{theorem}\label{thm:HitchinMap}
Let $X$ and $X'$ be two irreducible smooth complex projective curves of genus $g$ and $g'$
respectively, with $g,\,g'\,\ge\, 2$. Let $r,\,r'\,\ge\, 2$ such that $(g,\,r),\, (g',r')\, \not\in\,
\{(2,\,2),\,(2,\,3),\,(3,\,2)\}$. Fix line bundles $\xi$ and $\xi'$ on $X$ and $X'$ respectively. Let
$$\Psi\,:\,\SM(X,r,\xi) \,\longrightarrow\, \SM(X',r',\xi')$$ be an
isomorphism between the corresponding moduli stacks of vector bundles with fixed determinant. Then
$r\,=\,r'$ and $X\,\cong\, X'$.
\end{theorem}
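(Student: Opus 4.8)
The plan is to feed the isomorphism $\Psi$ through the reconstruction results already established and then extract the numerical invariants and the curve from the recovered Hitchin data. First I would observe that the substack of simple points is intrinsic: a point of $\SM(X,r,\xi)$ is simple precisely when its automorphism group is finite (equal to $\ZZ/r\ZZ$), whereas every non-simple bundle has a positive-dimensional automorphism group (the endomorphism algebra has dimension at least $2$, and cutting it down by the determinant condition still leaves a positive-dimensional group). Hence $\Psi$ restricts to an isomorphism $\SM^{\simp}(X,r,\xi)\cong \SM^{\simp}(X',r',\xi')$. Since $\SN^{\simp}$ is the total space of the zero part of the cotangent complex of $\SM^{\simp}$, functoriality of the cotangent complex for isomorphisms produces an isomorphism $\SN^{\simp}(X,r,\xi)\cong\SN^{\simp}(X',r',\xi')$ lying over $\Psi$ and intertwining the canonical $\GG_m$-actions that scale the Higgs field (equivalently, the cotangent fibers). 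Applying $\Spec\Gamma(-)$ together with Corollary \ref{cor:recoverHitchin}, this descends to a linear isomorphism $\bar\Psi\colon W\to W'$ which intertwines the Hitchin maps $\SH$ and $\SH'$ and is $\GG_m$-equivariant, hence graded for the weight decompositions $W=\bigoplus_{k=2}^r W_k$ and $W'=\bigoplus_{k=2}^{r'}W'_k$, where $\GG_m$ acts on $W_k$ with weight $k$ because the $k$-th Hitchin component $\tr(\wedge^k\Phi)$ is homogeneous of degree $k$ in the Higgs field. Finally, because the discriminant was characterized in Corollary \ref{cor:recoverDiscriminant} purely in terms of $\SN^{\simp}$, the map $\SH$, and the existence of rational curves with at least two non-isomorphic points in their image — all notions preserved by an isomorphism of stacks — the graded map $\bar\Psi$ carries $\SD$ onto $\SD'$.

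The second step extracts $r$ and $g$. Since $\dim W_k=(2k-1)(g-1)>0$ for every $k\in\{2,\dots,r\}$, the set of $\GG_m$-weights occurring in $W$ is exactly $\{2,3,\dots,r\}$, and likewise $\{2,\dots,r'\}$ for $W'$. A graded linear isomorphism matches these multisets of weights, so $r=r'$; in particular the largest weight recovers the rank. Comparing the weight-$2$ pieces gives $\dim W_2=\dim W'_2$, that is $3(g-1)=3(g'-1)$, so $g=g'$. (Alternatively, $r=r'$ follows already by comparing the automorphism group $\ZZ/r\ZZ$ of a simple point with $\ZZ/r'\ZZ$.)

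The third and decisive step is to reconstruct the isomorphism class of $X$ itself, not merely its genus, from the graded pair $(W,\SD)$, now matched with $(W',\SD')$ through $\bar\Psi$. This is where I expect the real difficulty to lie, and it is carried out as in \cite{BGM13} (see also \cite{KP95}): for a general $s\in\SD$ the spectral curve $X_s$ is irreducible with a single node (\cite[Remark 1.7]{KP95}), and projecting that node down to $X$ realizes a dense open subset of $\SD$ as a fibration over $X$; reconstructing this fibration from the intrinsic geometry of the discriminant hypersurface recovers the curve. The codimension estimate of Lemma \ref{lemma:codimension}, which underlies the recovery of $\SD$ in Corollary \ref{cor:recoverDiscriminant} and forces the exclusion of the three cases $(2,2)$, $(2,3)$, $(3,2)$, is exactly what keeps this reconstruction available throughout the present range of $(g,r)$. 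Since $\bar\Psi$ is a graded linear isomorphism carrying $\SD$ to $\SD'$, it transports this reconstruction and yields $X\cong X'$, which together with $r=r'$ completes the proof.
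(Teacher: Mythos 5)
Your outline reproduces the paper's proof of Theorem \ref{thm:HitchinMap} step for step: restrict $\Psi$ to the simple locus, differentiate to get an isomorphism of the Higgs stacks $\SN^{\simp}$, use Corollary \ref{cor:recoverHitchin} to descend to a map $f\colon W\to W'$ intertwining the Hitchin maps, use Corollary \ref{cor:recoverDiscriminant} to conclude $f(\SD)=\SD'$, and finish by appealing to \cite{BGM13}. Two points need repair, both in how you handle $f$ and the final reconstruction.

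First, $f$ is not in general linear or graded; it is only a $\CC^*$-equivariant algebraic isomorphism of affine spaces. Equivariance for the weighted action $\lambda\cdot(s_2,\dots,s_r)=(\lambda^2 s_2,\dots,\lambda^r s_r)$ still permits, for example, the component of $f$ landing in $W'_4$ to contain a term quadratic in $s_2$ as soon as $r\ge 4$. What equivariance does give --- and what the paper actually uses --- is that $f$ preserves the decreasing filtration $W_{\ge k}=\bigoplus_{j\ge k}W_j$, whence $r=r'$ by comparing lengths, and that the restriction of $f$ to the bottom piece $W_r=W_{\ge r}$ is an honest linear isomorphism onto $W'_r$, because on that subspace the only monomials of weight $r$ are the linear ones. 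Your deduction of $r=r'$ survives via the filtration, but you should not transport the final reconstruction through a ``graded linear'' $f$ that has not been established. Second, the decisive step is more specific than recovering a fibration of $\SD$ over $X$ via the node of the spectral curve: the paper intersects the discriminant with the top filtration piece, sets $\SC=\PP(\SD\cap W_r)\subset\PP(W_r)$ and $\SC'=\PP(\SD'\cap W'_r)$, uses the linearity of $f|_{W_r}$ to obtain a projective-linear isomorphism carrying $\SC$ to $\SC'$, and then invokes \cite[Proposition 4.2]{BGM13}, which identifies the dual variety $\SC^\vee\subset\PP(W_r^\vee)$ with the curve $X$ embedded by $K_X^{\otimes r}$. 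It is exactly here that the linearity of $f|_{W_r}$ (and nothing more) is needed, since dual varieties are only functorial under projective-linear maps. With these two corrections your argument coincides with the paper's.
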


\begin{proof} Let $\Psi\,:\,\SM(X,r,\xi) \,\longrightarrow\, \SM(X',r',\xi')$ be an isomorphism of 
stacks. Then it preserves the locus of objects with zero-dimensional stabilizers. Any vector bundle of rank $r$
with fixed determinant admits a natural action of the group of $r$-th roots of unity by dilation. Thus, the size of the stabilizer of any object in the moduli stack $\SM(X,r,\xi)$ with a zero-dimensional stabilizer is at least $r$, and objects whose stabilizer has the minimum possible size $r$ are simple. Since these exist (for instance, stable objects are simple), we can characterize the locus $\SM^{\simp}(X,r,\xi)$ inside $\SM(X,r,\xi)$ as
the locus of objects with minimal stabilizer. As this property is preserved through the isomorphism $\Psi$, the
map $\Psi$ restricts to an isomorphism
$$\Psi^{\simp}\,:\,\SM^{\simp}(X,r,\xi)\,\longrightarrow\, \SM^{\simp}(X',r',\xi')$$
between the corresponding loci of simple objects. This map $\Psi^{\simp}$ induces an isomorphism between the corresponding cotangent complexes. As the moduli stack of
bundles is smooth, both complexes are concentrated in orders $0$ and $1$ and there is an isomorphism
$$d^1((\Psi^{\simp})^{-1})\,:\,\SN^{\simp}(X,r,\xi) \,\longrightarrow \,\SN^{\simp}(X',r',\xi').$$
Let
$$W\,=\,\bigoplus_{k=2}^r H^0(X,\ K^{\otimes k}_X),\, \ \ \ \ W'\,=\,
\bigoplus_{k=2}^{r'}H^0(K^{\otimes k}_{X'}).$$
By Corollary \ref{cor:recoverHitchin}, there exists an isomorphism $f\,:\,W\,\stackrel{\cong}{\longrightarrow}\, W'$ such that the
following diagram is commutative:
\begin{equation}\label{ef}
\xymatrix{
\SN^{\simp}(X,r,\xi) \ar[rrr]^{d^1((\Psi^{\simp})^{-1})} \ar[d]_{\SH} &&& \SN^{\simp}(X',r',\xi') \ar[d]^{\SH'}\\
W \ar[rrr]^{f}& && W'
}
\end{equation}
As the map $d^1((\Psi^{\simp})^{-1})$ is $\CC$-linear, the map $f$ in \eqref{ef} is $\CC^*$-equivariant for the
$\CC^*$ actions on $W$ and $W'$ making the Hitchin maps $\CC^*$-equivariant; more precisely,
the $\CC^*$ action is the diagonal weighted action
$$\lambda \cdot (s_2,\,\cdots,\,s_r) \,=\, (\lambda^2 s_2,\,\cdots,\, \lambda^r s_r).$$
In particular, $f$ preserves the filtrations of subspaces of $W$ and $W'$ in terms of the asymptotic decay of
the corresponding $\CC^*$-actions:
$$W\,=\,W_{\ge 2}\,\supsetneq \,W_{\ge 3} \,\supsetneq\, \cdots \,\supsetneq\, W_{\ge r}\,\supsetneq\, 0,$$
$$W'\,=\,W'_{\ge 2}\,\supsetneq \,W'_{\ge 3} \,\supsetneq\, \cdots \,\supsetneq\, W'_{\ge r'}\,\supsetneq\, 0,$$
where $W_{\ge k} \,=\, \bigoplus_{j=k}^r H^0(X,\, K_X^k) \,= \,\bigoplus_{j=k}^r W_k$ and
$W'_{\ge k} \,=\, \bigoplus_{j=k}^{r'} H^0(X',\, K_{X'}^k) \,= \,\bigoplus_{j=k}^{r'} W'_k$.

Observe that the length of the filtrations of $W$ and $W'$ are $r-1$ and $r'-1$ respectively, and so 
we conclude that $r\,=\,r'$. Moreover, $f$ sends $W_r\,\subset\, W$ to $W'_{r'}\,=\,W_r'\,\subset\, W'$ and, as the 
$\CC^*$-action is homogeneous of degree $r$ in $W_r$ and $W_r'$, and $f$ is $\CC^*$-equivariant, we conclude that
$f$ restricts to a linear map $f_r\,:\,W_r\,\longrightarrow\, W_r'$.

On the other hand, as $d^1((\Psi^{\simp})^{-1})$ is an isomorphism, it induces a bijection between the 
set $\Gamma$ of maps $\PP^1\,\longrightarrow\, \SN^{\simp}(X,r,\xi)$ whose image contains non-isomorphic 
points and the set $\Gamma'$ of maps $\PP^1\,\longrightarrow \,\SN^{\simp}(X',r',\xi')$ whose image 
contains non-isomorphic points. By Corollary \ref{cor:recoverDiscriminant}, this implies that the map 
$f\,:\,W \,\longrightarrow\, W'$ sends the Hitchin discriminant $\SD\,\subset\, W$ to the Hitchin 
discriminant $\SD'\,\subset\, W'$.

As $f(\SD)\,=\,\SD'$ and $f(W_r)\,=\,W'_r$, we have $f(\SD\cap W_r)\,=\,\SD'\cap W_r'$. Let
$$\SC\,=\,\PP(\SD\cap 
W_r)\,\subset\, \PP(W_r) \,\, \text{ and }\ \ \SC'\,=\,\PP(\SD'\cap W_r')\,\subset\, \PP(W_r').$$
Since $f_r\,:\,W_r\,
\longrightarrow\, W_r'$ is linear and $f_r(\SD\cap W_r)\,=\,\SD'\cap W_r'$, we conclude that $f_r$ induces
an isomorphism between $\PP(W_r)$ and $\PP(W_r')$ sending 
$\SC$ to $\SC'$. Then, it induces an isomorphism between the corresponding dual varieties $\SC^\vee\,\subset\, 
\PP(W_r^\vee)$ and $(\SC')^\vee \,\subset\, \PP((W_r')^\vee)$. By \cite[Proposition 4.2]{BGM13}, we have
$\SC^\vee \,\cong\, X\,\subset\, \PP(W_r^\vee)$ and $(\SC')^\vee\,\cong\, X'\,\subset\, \PP((W_r')^\vee)$.
This completes the proof.
\end{proof}

\section{``Beyond GIT'' techniques}\label{section:beyondGIT}

As before, let $\SM^{\ssvb}(X,r,\xi)\,\subset\, \SM(X,r,\xi)$ be the substack of semistable vector bundles 
with fixed determinant $\xi$, and denote by $M^{\ssvb}(X,r,\xi)$ the corresponding projective moduli 
scheme of rank $r$ semistable vector bundles on $X$ with fixed determinant $\xi$. As mentioned in the 
introduction, there exist multiple Torelli type theorems for the moduli scheme of vector bundles of rank 
$r\ge 2$ \cite{MN68,Tyu69,Tyu70,NR75,HR04,Sun05,BGM13,AB,BH12,Ngu07} showing that if $M^{\ssvb}(X,r,\xi)$ 
is isomorphic to $M^{\ssvb}(X',r',\xi')$ for some irreducible, smooth projective curve $X'$ and a line 
bundle $\xi'$ on $X'$, then $X$ is isomorphic to $X'$.

Thus if we want to show that the moduli stack $\SM(X,r,\xi)$ uniquely determines $X$, it is enough to 
recover the projective moduli scheme $M^{\ssvb}(X,r,\xi)$ from the stack $\SM(X,r,\xi)$ (provided
$M^{\ssvb}(X,r,\xi)$ uniquely determines $X$). 

One way to recover the moduli substack of semistable bundles is to use ideas from ``beyond GIT'', the theory developed by Alper, Halpern-Leistner and Heinloth 
\cite{Alp,He,HL,AlHLHe}. See \cite{ABBLT} for an exposition in the case of vector bundles.

In this theory, the notion of $\SL$-stability on a stack $\SM$ is defined, where $\SL$ is a line bundle on $\SM$. For this, we first need to introduce the 
quotient stack $\Theta\,=\,[\Spec(\mathbb{C}[t])/\GG_m]$, with the standard action
of $\GG_m$ on the line $\Spec \mathbb{C}[t]$. There are two orbits: $t\,=\,0$ and $t
\,\neq\, 0$ and therefore the stack $\Theta$ has two points which we call
$t\,=\,0$ (with automorphism group $\GG_m$) and $t\,=\,1$ (with trivial automorphism group). 

A \emph{filtration} of a point $x\,\in\, \SM$ is a morphism $f\,:\,\Theta\,\longrightarrow\,
\SM$ together with an isomorphism $f(1)\,\cong\, x$. We note that the name ``filtration'' comes from the fact
that, if $\SM$ is the moduli stack of coherent sheaves then, by the Rees construction, giving such a morphism is equivalent to giving a $\ZZ$-indexed 
filtration of the sheaf $f(1)$, and the point $f(0)$ corresponds to the 
associated graded sheaf.

The line bundle $f^*\SL$ on $\Theta$ can be thought of as a $\GG_m$-equivariant line bundle on $\Spec 
\mathbb{C}[t]$. Let $\operatorname{wt}(f^*\SL|_0)$ be the weight of this equivariant line bundle on the 
fiber over $t\,=\,0$.

\begin{definition}[{$\mathcal{L}$-semistability \cite[Definition 1.2 and Remark 1.3]{He},\cite{HL}}]
\label{def:l-ss}
A point $x\,\in\, \SM$ in an algebraic stack $\SM$ is called $\SL$-semistable if for all filtrations
$f\,:\,\Theta\,\longrightarrow\, \SM$ of $x$, we have
$$
\operatorname{wt}(f^* \SL|_0)\ \leq\ 0.
$$
\end{definition}

\begin{remark}
\label{different-l}
Note that the weight $\operatorname{wt}(f^* \SL|_0)$ 
is given by the group homomorphism
$$
f^* (\cdot)|_0\,\,:\,\,\op{Pic}(\SM) \,\,\too\,\, \op{Pic}(B\GG_m)\,\,\cong\,\, \ZZ .
$$
This implies the following:
\begin{itemize}
\item The notion of $\SL$-semistability depends only on the 
class of $\SL$ modulo torsion. 

\item The notion of $\SL$-stability only depends on the class of 
$\SL$ in the quotient $\op{Pic}(\SM)/\op{Pic}^0(\SM)$,
where $\op{Pic}^0(\SM)$ is the connected component of the identity element.

\item Note that 
$\operatorname{wt}(f^* \SL^a|_0)\,=\,a\operatorname{wt}(f^* \SL|_0)$ and
then, if $a\,>\,0$, a point is $\SL$-semistable if and only if it is 
$\SL^a$ semistable. 

\item Therefore, we can define $\SL$-semistability for any rational line
bundle $\SL\,\in\, \op{Pic}{\SM}\otimes \mathbb{Q}$, and it depends
only on the line $\QQ_{>0}\SL$.

\item If we precompose $f\,:\,\Theta\,\too\, \SM$ with the map
$\Theta\, \stackrel{[n]}{\too}\, \Theta$ defined by $t\,\longmapsto\, t^n$, 
then the weight 
$\operatorname{wt}(f^* \SL|_0)$ gets multiplied by $n$, so its sign does
not change.

\end{itemize}
\end{remark}

Let $\SL_{\det}$ be the determinant line bundle on the moduli stack of vector bundles 
$\SM(X,r,\xi)$ whose fiber over a vector bundle $E$ is $\det(H^0(E))^{-1}\otimes \det (H^1(E))$. More 
precisely, for any $f\,:\,T\,\longrightarrow\, \SM(X,r,\xi)$ corresponding to a vector bundle $\SE$ on $X\times T$, we have
$f^*\SL_{\det}\,=\,\det(Rp^{}_T{}^{}_* \SE)^{-1}$.

Recall that $\op{Pic}(\SM(X,r,\xi))\otimes \QQ\,\, \cong\,\,\QQ$ with $\SL_{\det}$
being a generator. This was proved for 
the moduli functor and the moduli scheme in \cite{DN}. For a detailed proof in the case of the moduli stack, 
valid for any genus, see \cite[Proposition 4.2.3 and Theorem 4.2.1]{BH10} (see also \cite[Lemma 7.8 and Remark 7.11]{BL}, \cite{Fa94} and \cite{Fa03}).

\begin{proposition}\label{L-semistability}\mbox{}
\begin{itemize}
\item If $a\,<\,0$ is an integer, then all points $x\,\in\, \SM(X,r,\xi)$ are $\SL_{\det}^a$-unstable.

\item If $a\,=\,0$, then all points $x\,\in\, \SM(X,r,\xi)$ are $\SL_{\det}^a$-semistable.

\item If $a\,>\,0$ is an integer, then $x\,\in\, \SM(X,r,\xi)$ is $\SL_{\det}^a$-semistable if and only if
the vector bundle $E$ corresponding to $x$ is semistable in the usual sense.
\end{itemize}
\end{proposition}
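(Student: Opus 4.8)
The plan is to reduce all three statements to a single weight computation for the determinant line bundle, performed on the associated graded of a filtration; throughout I write $\mu(\cdot)\,=\,\deg/\rk$ for the slope. By the third and fourth items of Remark \ref{different-l}, for $a\,>\,0$ a point is $\SL_{\det}^a$-semistable if and only if it is $\SL_{\det}$-semistable, so the case $a\,>\,0$ reduces to $a\,=\,1$; and for $a\,=\,0$ the line bundle $\SL_{\det}^0\,=\,\SO_{\SM}$ is trivial, whence $\op{wt}(f^*\SL_{\det}^0|_0)\,=\,0\,\le\,0$ for every filtration $f$, giving the middle bullet at once. For $a\,<\,0$ I will use $\op{wt}(f^*\SL_{\det}^a|_0)\,=\,a\,\op{wt}(f^*\SL_{\det}|_0)$.

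First I would make the weight formula explicit. By the Rees correspondence recalled before Definition \ref{def:l-ss}, a filtration $f\,:\,\Theta\,\longrightarrow\,\SM(X,r,\xi)$ of a point corresponding to a bundle $E$ is a $\ZZ$-indexed filtration of $E$ by subbundles $\cdots\,\supseteq\, F^i\,\supseteq\, F^{i+1}\,\supseteq\,\cdots$, with $F^i\,=\,E$ for $i\,\ll\,0$ and $F^i\,=\,0$ for $i\,\gg\,0$; the point $f(0)$ is the associated graded $\operatorname{gr}(E)\,=\,\bigoplus_i\operatorname{gr}^i$ with $\operatorname{gr}^i\,=\,F^i/F^{i+1}$, on whose $i$-th summand $\GG_m$ acts with weight $-i$. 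Since $f^*\SL_{\det}\,=\,\det(Rp_{T*}\SE)^{-1}$ and the fiber over $t\,=\,0$ is $\operatorname{gr}(E)$, the weight of $f^*\SL_{\det}|_0$ is $\sum_i i\,\chi(\operatorname{gr}^i)$. By Riemann--Roch $\chi(\operatorname{gr}^i)\,=\,\deg\operatorname{gr}^i+(1-g)\rk\operatorname{gr}^i$. Because the determinant is rigidified to $\xi$ (with trivial $\GG_m$-action), the induced action on $\det\operatorname{gr}(E)\,\cong\,\xi$ is trivial, i.e. $\sum_i i\,\rk\operatorname{gr}^i\,=\,0$; the Riemann--Roch correction therefore drops out and $\op{wt}(f^*\SL_{\det}|_0)\,=\,\sum_i i\,\deg\operatorname{gr}^i$. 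A summation by parts, writing $F^k\,=\,\bigoplus_{i\ge k}\operatorname{gr}^i$ and using the same relation to absorb the boundary term, then rewrites this as
$$\op{wt}(f^*\SL_{\det}|_0)\,=\,\sum_{k}\rk(F^k)\bigl(\mu(F^k)-\mu(E)\bigr),$$
the sum running over the finitely many steps with $0\,\ne\, F^k\,\ne\, E$.

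With this formula the case $a\,=\,1$ (hence every $a\,>\,0$) is immediate: if $E$ is semistable then $\mu(F^k)\,\le\,\mu(E)$ for every subbundle, so each summand is $\le 0$ and $\op{wt}(f^*\SL_{\det}|_0)\,\le\,0$ for all $f$; conversely a destabilizing subbundle $F$ with $\mu(F)\,>\,\mu(E)$, viewed as a two-step filtration, yields a strictly positive weight, exhibiting $\SL_{\det}$-instability. For $a\,<\,0$ it suffices to produce, for every $E$ of rank $r\,\ge\,2$, one filtration with $\op{wt}(f^*\SL_{\det}|_0)\,<\,0$, since then $\op{wt}(f^*\SL_{\det}^a|_0)\,=\,a\,\op{wt}(f^*\SL_{\det}|_0)\,>\,0$ and the point is $\SL_{\det}^a$-unstable. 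By the displayed formula it is enough to exhibit a proper subbundle $F$ with $\mu(F)\,<\,\mu(E)$, and such $F$ always exists: for $\deg A$ large the bundle $E^\vee\otimes A$ is globally generated of rank $r\,\ge\,2$, so a general section is nowhere vanishing and gives a surjection $E\,\twoheadrightarrow\, A$ onto a line bundle of arbitrarily large degree, whose kernel $F$ is a subbundle with $\mu(F)\,=\,(\deg E-\deg A)/(r-1)\,<\,\mu(E)$.

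The main obstacle is entirely the bookkeeping in the second paragraph: pinning down the Halpern--Leistner/Heinloth sign conventions for the Rees filtration and for $\op{wt}(f^*\SL_{\det}|_0)$ so that the resulting numerical function has the sign making $\SL_{\det}$-semistability agree with Mumford semistability, and correctly treating the boundary term in the summation by parts via the fixed-determinant relation $\sum_i i\,\rk\operatorname{gr}^i\,=\,0$. Once these conventions are fixed, all three bullets follow mechanically from the displayed weight formula.
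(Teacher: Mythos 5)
Your proposal is correct and follows essentially the same route as the paper: identify filtrations via the Rees construction, compute $\operatorname{wt}(f^*\SL_{\det}|_0)$ as a sum of terms $\rk(E_l)\bigl(\mu(E_l)-\mu(E)\bigr)$ (the paper quotes this formula, up to a positive constant, from Heinloth \cite[\S~1.E.c]{He}), and read off the three cases from the sign of that sum. The only substantive additions are details the paper leaves implicit --- the Riemann--Roch/fixed-determinant cancellation and the existence of a slope-decreasing subbundle for $a<0$ --- and both are correct, modulo the routine adjustment that a destabilizing two-step filtration must be indexed with weights satisfying $\sum_i i\,\rk\operatorname{gr}^i=0$ to live in the fixed-determinant stack.
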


\begin{proof}
Giving a morphism $\Theta\,=\,[\Spec(\mathbb{C}[t])/\GG_m]\,\longrightarrow\, \SM(X,r,\xi)$ is equivalent to giving a $\GG_m$-equivariant
morphism $\Spec(\mathbb{C}[t])\,\longrightarrow\, \SM(X,r,\xi)$, and this is equivalent to giving a vector bundle $\SE$ 
on $\Spec(\mathbb{C}[t])\times X$ together with a lift of the $\GG_m$ action on $\Spec(\mathbb{C}[t])$. By
the Rees construction, this is equivalent to giving a $\ZZ$-indexed filtration $E_\bullet$ of
a vector bundle $E$ on $X$, with 
$$E_i\,\supseteq \,E_{i+1}$$ for all $i$ such that $E_i\,=\,0$ for $i\,\gg \,0$ and $E_i\,=\,E$ for
$i\,\ll \,0$. Indeed, given such a filtration,
we define an $\SO_{X\times \Spec(\mathbb{C}[t])}$-module as $\SE\,:=\,\bigoplus_{i\in \ZZ}E_it^{-i}$. 
Then the restriction of $\SE$ to the slice $X\times \{t\}$ is isomorphic to $E$ if $t\,\neq\, 0$ and
it is isomorphic to the associated graded object $\op{gr}{E_\bullet}$ if $t\,=\,0$
(see \cite[Lemma 1.10]{He} for more details).

A calculation shows the following (see \cite[\S~1.E.c]{He}):
$$
\operatorname{wt}(f^* \SL_{\det}^a) \ =\ 2 a \sum \big( \rk(E)\deg(E_l)-\rk(E_l)\deg(E) \big)
$$
and the proposition follows.
\end{proof}

Therefore, the substack $\SM^{\ssvb}(X,r,\xi)$ of semistable vector bundles can be intrinsically
recovered from $\SM(X,r,\xi)$. More precisely, we have:

\begin{corollary}\label{cor:intrinsic}
Let $X$ be a smooth projective curve of any genus. 
Let $\SL$ be a line bundles on $\SM\,=\,\SM(X,r,\xi)$ such that the substack of $\SL$-semistable points satisfies
the condition $\emptyset \,\subsetneq\, \SM^{\SL-\text{ss}} \,\subsetneq\, \SM$. Let $\SL'$ be another such line
bundle. Then $\SM^{\SL-\text{ss}}\,=\,\SM^{\SL'-\text{ss}}$, and this is the substack 
$\SM^{\ssvb}(X,r,\xi)$ of semistable vector bundles in the usual sense.
\end{corollary}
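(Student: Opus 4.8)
The plan is to reduce an arbitrary such $\SL$ to an explicit power of the determinant line bundle $\SL_{\det}$, using the rank-one computation of the rational Picard group together with the formal properties of $\SL$-semistability collected in Remark \ref{different-l}, and then to read off the semistable locus from Proposition \ref{L-semistability}. By symmetry between $\SL$ and $\SL'$ it suffices to prove that any line bundle $\SL$ on $\SM$ satisfying $\emptyset \subsetneq \SM^{\SL-\text{ss}} \subsetneq \SM$ has $\SM^{\SL-\text{ss}} = \SM^{\ssvb}(X,r,\xi)$.

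First I would use the fact, recalled above, that $\op{Pic}(\SM)\otimes\QQ \cong \QQ$ with generator $\SL_{\det}$. Thus the image of $\SL$ in $\op{Pic}(\SM)\otimes\QQ$ is $a\,\SL_{\det}$ for a unique $a \in \QQ$; concretely, there is a positive integer $n$, an integer $m$, and a torsion line bundle $T$ with $\SL^{n} \cong \SL_{\det}^{m}\otimes T$ and $a = m/n$. Next I would strip away the inessential data by invoking Remark \ref{different-l}. The $\SL$-semistable locus is unchanged modulo torsion, so $T$ may be discarded; it is unchanged under passing to a positive power, so replacing $\SL$ by $\SL^{n}$ is harmless; and it extends to rational line bundles and depends only on the ray $\QQ_{>0}\SL$ in $\op{Pic}(\SM)\otimes\QQ$. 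Combining these observations, the locus $\SM^{\SL-\text{ss}}$ depends only on the sign of $a$: it coincides with the $\SL_{\det}$-semistable locus when $a>0$, with the $\SO_{\SM}$-semistable locus when $a=0$, and with the $\SL_{\det}^{-1}$-semistable locus when $a<0$.

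Finally I would apply Proposition \ref{L-semistability} to the three cases. If $a=0$ every point is semistable, giving $\SM^{\SL-\text{ss}}=\SM$; if $a<0$ every point is unstable, giving $\SM^{\SL-\text{ss}}=\emptyset$. Both are excluded by the hypothesis, so $a>0$ and $\SM^{\SL-\text{ss}}$ is the $\SL_{\det}$-semistable locus, which by the third item of Proposition \ref{L-semistability} is exactly the substack $\SM^{\ssvb}(X,r,\xi)$ of bundles that are semistable in the usual sense. The identical argument applied to $\SL'$ yields $\SM^{\SL'-\text{ss}}=\SM^{\ssvb}(X,r,\xi)$, whence $\SM^{\SL-\text{ss}}=\SM^{\SL'-\text{ss}}=\SM^{\ssvb}(X,r,\xi)$.

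I expect the only genuinely delicate point to be the bookkeeping in the first two steps, namely carefully moving between integral line bundles, their positive powers, the torsion subgroup, and rational classes so that the principle ``$\SL$-semistability depends only on the ray $\QQ_{>0}\SL$'' legitimately applies. Once the reduction to $\SL_{\det}^{\pm 1}$ or $\SO_{\SM}$ is in place there is no further geometric content: the conclusion is immediate from the trichotomy in Proposition \ref{L-semistability}, with the hypothesis $\emptyset \subsetneq \SM^{\SL-\text{ss}} \subsetneq \SM$ serving precisely to eliminate the two degenerate signs and force $\SL$ into the same chamber as $\SL_{\det}$.
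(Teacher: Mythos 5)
Your argument is correct and is precisely the deduction the paper has in mind: the paper's proof of this corollary is the one-line "follows immediately from Proposition \ref{L-semistability}," with the reduction via $\op{Pic}(\SM)\otimes\QQ\cong\QQ$ and the invariance properties of Remark \ref{different-l} left implicit. Your write-up simply makes that implicit bookkeeping explicit, so there is nothing to flag.
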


\begin{proof}
This follows immediately from Proposition \ref{L-semistability}.
\end{proof}

Alternatively, the substack of semistable vector bundles can also be recovered using a result of 
Faltings \cite[Theorem I.3]{Fa93} (see also \cite[Proposition 1.6.2]{Ray} 
and \cite[Theorem 6.2 and Lemma 
8.3 by Nori]{Se}) which identifies the complement of $\SM^{\ssvb}(X,r,\xi)$ in $\SM(X,r,\xi)$ as the 
substack of $k$-points on which all sections of powers of the generator of the determinant of the 
cohomology line bundle vanish. See also recent works of Weissmann-Zhang for another approach \cite{WZ}.

Once we recover the substack parametrizing the semistable locus, we can apply 
\cite[Theorem 3.12]{ABBLT} to construct a {\em good moduli space} (in the sense of J. Alper \cite{Alp}) 
${M}^{\ssvb}(X,r,\xi)$ of $\SM^{\ssvb}(X,r,\xi)$ and a map $$\SM^{\ssvb}(X,r,\xi) \,
\,\longrightarrow \,\,{M}^{\ssvb}(X,r,\xi),$$
which coincides with the usual moduli space of semistable vector bundles.

\begin{proposition}\label{prop:beyondGIT}
Let $X$ and $X'$ be smooth complex projective curves of any genus
and $r,\, r'\,>\,1$. If $\SM(X,r,\xi)\,
\cong\, \SM(X',r',\xi')$, then $M^{\ssvb}(X,r,\xi)\,\cong\, M^{\ssvb}(X',r',\xi')$.
\end{proposition}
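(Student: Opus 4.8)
The plan is to upgrade the given stack isomorphism to an isomorphism of the semistable substacks, and then to descend it to the good moduli spaces by functoriality. Write $\Phi\,:\,\SM(X,r,\xi)\,\longrightarrow\,\SM(X',r',\xi')$ for the given isomorphism and let $\SL'_{\det}$ be the determinant line bundle on the target, constructed exactly as $\SL_{\det}$ on the source. The key observation is that $\SL$-semistability in the sense of Definition \ref{def:l-ss} is transported by any isomorphism of stacks. Indeed, set $\SL\,:=\,\Phi^*\SL'_{\det}$. A filtration $f\,:\,\Theta\,\longrightarrow\,\SM(X,r,\xi)$ of a point $x$ corresponds bijectively, via post-composition $g\,:=\,\Phi\circ f$, to a filtration of $\Phi(x)$, and $f^*\SL\,=\,f^*\Phi^*\SL'_{\det}\,=\,g^*\SL'_{\det}$, so the weights $\operatorname{wt}(f^*\SL|_0)$ and $\operatorname{wt}(g^*\SL'_{\det}|_0)$ coincide. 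Hence $x$ is $\SL$-semistable if and only if $\Phi(x)$ is $\SL'_{\det}$-semistable, and $\Phi$ carries the $\SL$-semistable locus of $\SM(X,r,\xi)$ isomorphically onto the $\SL'_{\det}$-semistable locus of $\SM(X',r',\xi')$.

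Next I would identify both loci intrinsically. By Proposition \ref{L-semistability} the $\SL'_{\det}$-semistable locus equals $\SM^{\ssvb}(X',r',\xi')$, which is a nonempty proper substack (semistable bundles exist and, since $r'\,>\,1$, so do unstable ones). Transporting this through the isomorphism $\Phi$, the locus $\SM^{\SL-\text{ss}}(X,r,\xi)$ is nonempty and proper as well, i.e. $\emptyset\,\subsetneq\,\SM^{\SL-\text{ss}}\,\subsetneq\,\SM(X,r,\xi)$. Now Corollary \ref{cor:intrinsic} applies to the line bundle $\SL$ and yields $\SM^{\SL-\text{ss}}(X,r,\xi)\,=\,\SM^{\ssvb}(X,r,\xi)$, regardless of the fact that $\SL$ need not equal $\SL_{\det}$. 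Consequently $\Phi$ restricts to an isomorphism of semistable substacks $\SM^{\ssvb}(X,r,\xi)\,\stackrel{\cong}{\longrightarrow}\,\SM^{\ssvb}(X',r',\xi')$.

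Finally I would pass to good moduli spaces. As recalled before the statement, each semistable substack admits a good moduli space map $\SM^{\ssvb}(X,r,\xi)\,\longrightarrow\,M^{\ssvb}(X,r,\xi)$, and likewise for $X'$, coinciding with the usual projective moduli scheme. By the universal property of good moduli spaces (they are initial among morphisms to algebraic spaces, hence functorial for isomorphisms), the composite $\SM^{\ssvb}(X,r,\xi)\,\stackrel{\Phi}{\longrightarrow}\,\SM^{\ssvb}(X',r',\xi')\,\longrightarrow\,M^{\ssvb}(X',r',\xi')$ factors uniquely through $M^{\ssvb}(X,r,\xi)$, giving a morphism $M^{\ssvb}(X,r,\xi)\,\longrightarrow\,M^{\ssvb}(X',r',\xi')$; running the same argument for $\Phi^{-1}$ produces a two-sided inverse, so the two moduli schemes are isomorphic.

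The step demanding the most care is the transport of the polarization. An arbitrary stack isomorphism need not fix the class of the determinant line bundle: under the identification $\op{Pic}(\SM)\otimes\QQ\,\cong\,\QQ$ it only sends $\SL'_{\det}$ to some rational multiple of $\SL_{\det}$, and a priori even the sign of that multiple is unknown, so one cannot match $\SL_{\det}$ on the nose. The point of the argument is precisely to avoid doing so: the intrinsic characterization of Corollary \ref{cor:intrinsic}, which makes the semistable locus independent of the choice of $\SL$ satisfying the nondegeneracy condition $\emptyset\,\subsetneq\,\SM^{\SL-\text{ss}}\,\subsetneq\,\SM$, is exactly what eliminates this ambiguity and lets the conclusion go through for the pulled-back bundle $\SL\,=\,\Phi^*\SL'_{\det}$.
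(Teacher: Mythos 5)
Your proposal is correct and follows essentially the same route as the paper: pull back the determinant line bundle via the isomorphism, observe that $\SL$-semistability is transported, invoke Corollary \ref{cor:intrinsic} to identify the $\Phi^*\SL'_{\det}$-semistable locus with the usual semistable substack, and descend to the good moduli spaces by their universal (corepresenting) property. Your closing remark about why one cannot match $\SL_{\det}$ on the nose, and why Corollary \ref{cor:intrinsic} is precisely the device that circumvents this, correctly identifies the one subtle point and is exactly the role that corollary plays in the paper's argument.
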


\begin{proof}
Assume that we have an isomorphism
$$\Psi\,:\,\SM(X,r,\xi) \,\,\longrightarrow\,\, \SM(X',r',\xi').$$
Let $\SL'\,=\,\SL'_{\det}$ be the determinant line bundle on $\SM(X',r',\xi')$,
and let $\SL\,=\,\Psi^*\SL'$. Using the definition of $\SL$-semistability,
it is easy to check that $\Psi$ restricts to an isomorphism between
$\SM^{\SL-\text{ss}}(X,r,\xi)$ and 
$\SM^{\SL'-\text{ss}}(X',r',\xi')\,=\, \SM^{\ssvb}(X',r',\xi')$.
By Corollary \ref{cor:intrinsic} we
obtain that $\SM^{\SL-\text{ss}}(X,r,\xi)\,=\,\SM^{\ssvb}(X,r,\xi)$.
Therefore, $\Phi$ restricts to an isomorphism
$$\Psi^{\ssvb}\,:\,\SM^{\ssvb}(X,r,\xi) \,\longrightarrow\, \SM^{\ssvb}(X',r',\xi')\, .$$
Let $\pi$ and $\pi'$ be the projections from each of these moduli stacks of semistable bundles to
the respective moduli schemes. Consider the composition of maps
$$\pi'\circ \Psi^{\ssvb}\, : \, \SM^{\ssvb}(X,r,\xi) \,\longrightarrow\, M^{\ssvb}(X',r',\xi')\, .$$
By \cite[Theorem 6.6]{Alp}, the good quotient $M^{\ssvb}(X,r,\xi)$ corepresents the moduli stack $\SM^{\ssvb}(X,r,\xi)$. Thus, the map $\pi'\circ \Psi^{\ssvb}$ factors through the moduli scheme $M^{\ssvb}(X,r,\xi)$:
\[
\xymatrix{
\SM^{\ssvb}(X,r,\xi) \ar[rr]^{\Psi^{\ssvb}} \ar[d]_{\pi} && \SM^{\ssvb}(X',r,\xi') \ar[d]^{\pi'}\\
M^{\ssvb}(X,r,\xi) \ar[rr]^{\psi} && M^{\ssvb}(X',r',\xi')
}
\]
As the inverse of $\Psi^{\ssvb}$ also descends, the above map $\psi$ is an isomorphism.
\end{proof}

From Proposition \ref{prop:beyondGIT} we can obtain the Torelli theorem for the moduli stacks applying any of the 
existing Torelli theorems for the moduli schemes. For instance, the following theorem results by
applying \cite[Corollary 2.12]{AB}.

\begin{theorem}
\label{thm:beyondGIT}
Let $X$ and $X'$ be smooth complex projective curves of genus at least 4. Suppose that $r,\,r'\,\ge\,
2$. Let $\xi$ and $\xi'$ be line bundles on $X$ and $X'$ respectively. Then
$\SM(X,r,\xi)\,\cong \,\SM(X',r',\xi')$ if and only if $X\,\cong\, X'$, $r\,=\,r'$ and
$\deg(\xi)\,\cong\, \pm \deg(\xi') \pmod{r}$.
\end{theorem}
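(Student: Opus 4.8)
The statement is an equivalence, so the plan is to treat the two implications separately. The forward implication will assemble results already established, while the reverse will be a direct construction of stack isomorphisms.

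For the forward direction I would start from an isomorphism $\SM(X,r,\xi)\cong\SM(X',r',\xi')$. Since $r,r'\ge 2$, Proposition \ref{prop:beyondGIT} applies and produces an isomorphism of projective moduli schemes $M^{\ssvb}(X,r,\xi)\cong M^{\ssvb}(X',r',\xi')$. Because both curves have genus at least $4$, I would then invoke the scheme-level Torelli theorem \cite[Corollary 2.12]{AB}, which recovers the curve together with the pair $(r,\pm\deg\xi\pmod r)$ from the isomorphism class of the moduli scheme. This yields $X\cong X'$, $r=r'$ and $\deg\xi\equiv\pm\deg\xi'\pmod r$, as desired. Note that Proposition \ref{prop:beyondGIT} imposes no genus restriction, so the hypothesis $g,g'\ge 4$ enters only through \cite{AB}.

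For the reverse direction, assuming $X\cong X'$, $r=r'$ and $\deg\xi\equiv\pm\deg\xi'\pmod r$, I would first transport along $X\cong X'$ to reduce to $X=X'$ and $r=r'$, so that it suffices to build an isomorphism $\SM(X,r,\xi)\cong\SM(X,r,\eta)$ whenever $\deg\eta\equiv\pm\deg\xi\pmod r$. The plan is to use two elementary operations, each extending to families and hence defining an isomorphism of the full moduli stacks. The first is tensoring by a line bundle $L$: the assignment $E\mapsto E\otimes L$ preserves the rank and multiplies the determinant by $L^{\otimes r}$, acting on a trivialization $\varphi\colon\det E\to\xi$ by $\varphi\otimes\id_{L^{\otimes r}}$; this gives $\SM(X,r,\xi)\cong\SM(X,r,\xi\otimes L^{\otimes r})$, with inverse given by tensoring by $L^{-1}$. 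To hit an arbitrary $\eta$ with $\deg\eta-\deg\xi=rk$, I would pick $L_0$ of degree $k$ and use the surjectivity of multiplication by $r$ on $\op{Pic}^0(X)$ to write $\eta\otimes\xi^{-1}\otimes(L_0^{\otimes r})^{-1}=M^{\otimes r}$, so that $L=L_0\otimes M$ satisfies $\xi\otimes L^{\otimes r}\cong\eta$. The second operation is dualizing, $E\mapsto E^\vee$ with trivialization $\varphi\mapsto(\varphi^\vee)^{-1}\colon(\det E)^{-1}\to\xi^{-1}$ (and $\alpha\mapsto(\alpha^{-1})^\vee$ on morphisms, to keep the functor covariant); this gives $\SM(X,r,\xi)\cong\SM(X,r,\xi^{-1})$ and flips the sign of the degree. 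Composing the two operations produces the required isomorphism for every $\eta$ with $\deg\eta\equiv\pm\deg\xi\pmod r$.

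The genuinely deep content is already isolated in the two cited inputs, namely the recovery of the moduli scheme from the stack (Proposition \ref{prop:beyondGIT}) and the scheme-level Torelli theorem \cite[Corollary 2.12]{AB}, so I expect the main obstacle here to be purely formal: verifying that the two operations in the reverse direction are genuine isomorphisms of stacks, i.e. that they respect the determinant-trivialization data defining objects of $\SM$ and are natural with respect to base change in families. These verifications are routine, but they are the only place where care is required.
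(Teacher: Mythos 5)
Your proposal is correct and follows essentially the same route as the paper: the forward implication is exactly the paper's one-line proof (Proposition \ref{prop:beyondGIT} to pass to the moduli schemes, then \cite[Corollary 2.12]{AB}). The paper leaves the converse implicit, whereas you spell out the standard tensoring-by-$L$ and dualizing isomorphisms of stacks; that verification is routine and correct as you describe it.
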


\begin{proof}
If $\SM(X,r,\xi)\,\cong\, \SM(X',r',\xi')$, then Proposition \ref{prop:beyondGIT} implies that
$$M^{\ssvb}(X,r,\xi)\ \cong \ M^{\ssvb}(X',r',\xi')$$ 
and the theorem follows from \cite[Corollary 2.12]{AB}.
\end{proof}

Observe that the same argument can also be applied to prove a Torelli theorem for the moduli stack of 
rank $r$ bundles of fixed degree, invoking the appropriate Torelli theorem for moduli spaces. 

Similarly, we can consider principal $G$-bundles for any complex reductive group $G$. Let $\SM_G^d(X)$ be the 
connected component of moduli stack of principal $G$-bundles on $X$ corresponding to $d\,\in\, \pi_1(G)$ (the 
connected components of the moduli stack are parametrized by $\pi_1(G)$).

Recall that a principal $G$-bundle $E_G$ is semistable in the sense of Ramanathan if
for any reduction $P^Q\, \subset\, E_G$ to a parabolic subgroup $Q\subset G$ and
for any dominant character $\chi$ of $Q$, the degree of the associated
line bundle $P^Q(\chi)$ satisfies the inequality $\deg(P^Q(\chi))\,\leq\, 0$.

\begin{lemma}\label{recover-ss-gbundles}
Let $X$ be a smooth complex projective curve of any genus. Take a (rational) line bundle
$\SL\in\op{Pic}(\SM^d_G(X))\otimes \QQ$,
and let $$\SU_\SL \,=\, \SM_G^d(X)^{\SL-\textup{ss}}\,\subset\, \SM_G^d(X)$$ be the substack
of $\SL$-semistable principal $G$-bundles on $X$. Let $\SU$ be the intersections
of all $\SU_{\SL}$ which are nonempty. Then $\SU$ is the substack of
semistable principal $G$-bundles in the sense of Ramanathan. 

Furthermore, there exist a (rational) line bundle $\SL$ such that $\SU\,=\,\SU_{\SL}$.
\end{lemma}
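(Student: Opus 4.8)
The plan is to mirror the vector bundle case (Proposition \ref{L-semistability} and Corollary \ref{cor:intrinsic}), but now $\op{Pic}(\SM_G^d(X))\otimes\QQ$ is higher-dimensional, so $\SL$-semistability varies in a more interesting way as $\SL$ moves. I would organize the argument around three inputs: the identification of filtrations $f\colon\Theta\to\SM_G^d(X)$ with parabolic reductions, an explicit weight formula, and the equivalence between $\Theta$-semistability for a canonical polarization and Ramanathan semistability.

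First I would recall, following Heinloth \cite{He} (and \cite{HL}), that a filtration $f\colon\Theta\to\SM_G^d(X)$ of a $G$-bundle $E_G=f(1)$ is the same data as a reduction $E_Q\subset E_G$ of structure group to a parabolic $Q\subset G$ together with a cocharacter $\lambda$ determining $Q$, the central fibre $f(0)$ being the associated graded bundle. For a determinant-of-cohomology line bundle $\SL_\rho$ attached to a representation $\rho$ of $G$, the weight $\op{wt}(f^*\SL_\rho|_0)$ is computed (as in \cite[\S~1.E.c]{He}) from the $\lambda$-weight spaces of $\rho$ and the degrees of the associated bundles; in general $\op{wt}(f^*\SL|_0)$ is a linear combination of the quantities $\deg E_Q(\chi)$ appearing in Ramanathan's definition, with coefficients the coordinates of $\SL$ in $\op{Pic}(\SM_G^d(X))\otimes\QQ$.

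Next, using the structure of $\op{Pic}(\SM_G^d(X))\otimes\QQ$ (see \cite{BH10}), I would split it as $V_{\mathrm{ss}}\oplus V_0$, where $V_0$ consists of the classes pulled back from the Picard stack of the abelianization of $G$; since the abelianized bundle is unchanged along any filtration, these classes contribute nothing to $\op{wt}(f^*\SL|_0)$ and do not affect $\SL$-semistability, so $\SU_\SL$ depends only on the $V_{\mathrm{ss}}$-component of $\SL$. The cone picture to establish is: if this component lies in the interior of the ample cone then $\SU_\SL$ is exactly the Ramanathan-semistable locus; on a boundary face or at $0$ one gets a larger locus, still containing the Ramanathan-semistable bundles; and outside the closed ample cone $\SU_\SL=\emptyset$. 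Nonemptiness of the Ramanathan-semistable locus together with the equivalence in \cite{He} produces a canonical $\SL_0$ realizing the first case; this is the line bundle required by the ``Furthermore'' clause, and it also shows $\SU\subseteq\SU_{\SL_0}$ equals the Ramanathan-semistable locus. For the reverse inclusion I would show that every $\SL$ with $\SU_\SL\neq\emptyset$ satisfies $\SU_\SL\supseteq$ (Ramanathan-semistable locus): nonemptiness forces the $V_{\mathrm{ss}}$-coordinates into the closed ample cone, since a negative coordinate in a maximal-parabolic direction would, using a suitable reduction to that parabolic, give every $G$-bundle a filtration of strictly positive weight and hence $\SU_\SL=\emptyset$; with all relevant coordinates nonnegative, the weight formula writes $\op{wt}(f^*\SL|_0)$ as a nonnegative combination of the terms $\deg E_Q(\chi)$ with $\chi$ dominant, each $\le 0$ for a Ramanathan-semistable bundle, so such a bundle is $\SL$-semistable. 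Intersecting over all nonempty $\SU_\SL$ then yields precisely the Ramanathan-semistable substack.

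I would flag the main obstacle as making the cone description precise, in particular proving that a ``wrong-sign'' polarization kills the semistable locus. This rests on two geometric facts needing care for a general reductive $G$: that every $G$-bundle reduces to every parabolic (using properness of $G/Q$ and existence of sections over a curve), and that such reductions can be chosen with the relevant associated degree arbitrarily negative. For the positive-cone equivalence with Ramanathan semistability I would lean on \cite{He} rather than reprove it.
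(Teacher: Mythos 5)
Your overall cone picture and final conclusion agree with the paper's, and like the paper you delegate the equivalence ``positive multiple of the determinant bundle $\Leftrightarrow$ Ramanathan semistability'' to Heinloth \cite{He}. But there is a genuine error in your treatment of the abelian part of the Picard group. You claim that the classes in $V_0$, pulled back from the abelianization $G\to G/[G,G]$, ``contribute nothing to $\op{wt}(f^*\SL|_0)$ and do not affect $\SL$-semistability'' because ``the abelianized bundle is unchanged along any filtration.'' This is false: a filtration with cocharacter $\lambda$ induces a map $\Theta\to \SM_{G/[G,G]}\cong B(\GG_m^{\times s})\times J^{\times s}$ whose $B\GG_m$-components are classified by the integers $\langle\chi_i,\lambda\rangle$, and a character line bundle $L_a\in\op{Pic}(B\GG_m)=\ZZ$ pulls back with weight $a\langle\chi_i,\lambda\rangle$, which is nonzero in general (only the $\op{Pic}(J^{\times s})$ part genuinely contributes nothing, since any map from $\Theta$ to a separated scheme is constant). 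The correct statement, which the paper proves, is that a nonzero component of $\SL$ in the $\op{Pic}(B(\GG_m^{\times s}))=\ZZ^{s}$ directions forces $\SU_\SL=\emptyset$, because $\lambda$ and $-\lambda$ give filtrations with opposite weights, one of them positive; such $\SL$ are then discarded by the nonemptiness hypothesis. So your intermediate claim that $\SU_\SL$ depends only on the $V_{\mathrm{ss}}$-component of $\SL$ is wrong as stated, even though the final intersection happens to be unaffected.

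Second, you do not address how to reduce a general reductive $G$ to a situation where the moduli stack and its Picard group actually split as products, which is what your factorwise analysis presupposes. The paper passes to $G/Z'$, with $Z'$ the finite center of $[G,G]$, so that $G/Z'=G/[G,G]\times G/Z$ and $\SM^{d'}_{G/Z'}$ decomposes accordingly; to transfer $\SL$-semistability across $p\colon\SM^d_G\to\SM^{d'}_{G/Z'}$ it needs both that $p^*$ is an isomorphism on $\op{Pic}\otimes\QQ$ (\cite{BH10}) and that filtrations downstairs lift upstairs after a ramified cover $\Theta\stackrel{[n]}{\to}\Theta$, which rescales the weight by $n>0$ and hence preserves its sign. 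This lifting argument is a real ingredient, not bookkeeping: for groups with $Z'\neq 1$ (e.g.\ $G=\SSL_r$) your ``coordinates of $\SL$'' and the splitting of the semistability condition across factors are not justified without it. Your direct approach via parabolic reductions and an explicit weight formula could likely be completed, but as written it silently assumes the product structure that the paper has to establish, and it rests on the incorrect claim about $V_0$ identified above.
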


\begin{proof}
Since the curve $X$ is fixed during this proof, we will drop it entirely
from the notation, denoting the moduli stack by just $\SM_G^d$. 
Let $Z'$ be the center of $[G,\,G]$. It is a finite group. 
A principal $G$-bundle is semistable in the sense of Ramanathan if and
only if its extension of structure group to a principal $G/Z'$-bundle
is semistable. We are going to see that the same holds for $\SL$-semistability
in the sense of Definition \ref{def:l-ss}.

Consider the morphism
$$
p\,:\,\SM^d_G \,\longrightarrow\, \SM^{d'}_{G/Z'}
$$
which sends a principal $G$-bundle on $X$ to the associated $G/Z'$-bundle.
Let $P$ be a principal $G$-bundle on $X$ mapping to a principal $G/Z'$-bundle
$P'$.

We claim that a morphism $f'\,:\,\Theta\,\too\, \SM^{d'}_{G/Z'}$ (with $f'(1)\,=\,P'$) can be lifted
to $$f\ :\ \Theta\ \too\ \SM^d_G$$ (with $f(1)\,=\,P$) 
after passing to a ramified cover $\Theta \, \stackrel{[n]}{\too}\,\Theta$
given by $t\,\longmapsto\, t^n$. Indeed, in \cite[1.F.b]{He} it is proved that
there is a bijection between morphisms $f\,:\,\Theta\,\too \, \SM_G$ and 
equivalence classes of pairs $(\lambda:\GG_m\to G,\, P_\lambda\subset P)$ consisting of a one parameter
subgroup $\lambda\,:\,\GG_m\,\too\, G$ and
a reduction of structure group $P_\lambda\,\subset\, P$ of a principal $G$-bundle $P$ to the parabolic subgroup
$$\{g\,\in\, G\,\,\big\vert\,\, \lim_{t\to 0}\lambda(t)g\lambda(t^{-1}) \,\text{\,exists}\}\,\subset\, G .$$
Two pairs are equivalent if $\lambda$ is conjugate by
an element of the parabolic subgroup. In this bijection, if 
the morphism $f(t)$ is replaced by $f(t^n)$, then the
the one-parameter subgroup $\lambda(t)$ is replaced by $\lambda(t^n)$,
and the parabolic subgroup and reduction stay the same. 
Therefore a morphism $f\,:\,\Theta\,\too \,\SM^{d'}_{G/Z'}$ produces a 
one-parameter subgroup $\lambda'\,:\,\GG_m\,\too \,G/Z'$ and a reduction of structure
group of the principal $G/Z'$-bundle $P'$ to
the parabolic subgroup associated to $\lambda'$. The parabolic subgroups
of $G$ are the same as the parabolic subgroups of $G/Z'$, and 
a reduction of structure group of a principal $G$-bundle $P$
to a parabolic subgroup of $G$ induces a reduction, to 
the corresponding parabolic subgroup of $G/Z'$, of the principal $G/Z'$-bundle corresponding to $P$. 
On the other hand, since $Z'$ is a finite
abelian group, we have a Cartesian diagram 
$$
\xymatrix{
{\GG_m}\ar[r]^{\lambda}\ar[d]_{q} & {G}\ar[d]\\ 
{\GG_m}\ar[r]^{\lambda'} & {G/Z'}\\ 
}
$$ 
where $q$ is just the cover $t\,\longmapsto\, t^n$ for some $n$. Therefore,
$\lambda'$ can be lifted to $G$ after passing to a cover of order $n$.
This implies that $f$ can be lifted to $\SM^d_G$ as claimed.

By \cite[Def 5.2.1 and Thm 5.3.1]{BH10}
the morphism $p$ induces an isomorphism 
$
p^*\,:\,
\op{Pic}(\SM^{d'}_{G/Z'} )\otimes \QQ
\,\longrightarrow\, 
\op{Pic}(\SM^d_G )\otimes \QQ 
$.

Therefore, a point 
in $\SM^d_G $ is $\SL$-semistable 
if and only if its image 
in $\SM^{d'}_{G/Z'} $ is $\SL'$-semistable, where $p^*(\SL')\,\cong\, \SL$. 

Let $Z$ be the center of $G$. 
Note that $G/Z'\,=\,G/[G,\,G] \times G/Z$. Therefore, we have
\begin{equation}\label{eq:prod}
\SM^{d'}_{G/Z'} \,\,=\,\,\SM^{d_1}_{G/[G,\,G]}\times \SM^{d_2}_{G/Z} \, .
\end{equation}
The group $G/[G,\,G]$ is a torus
(isomorphic to $\mathbb{G}_m^r$), and the group $G/Z$ is semisimple and of adjoint type.
The global functions on $\SM^{d'}_{G/G'}$ are just the constant scalars $\CC$,
and $\op{Pic}(\SM^{d_2}_{G/Z})$ is discrete (by \cite[Theorem 5.3.1]{BH10}), so
\cite[Lemma 2.1.4]{BH10} gives:
$$
\op{Pic}(\SM^{d'}_{G/Z'}) \,=\,\op{Pic}(\SM^{d_1}_{G/[G,\,G]})\oplus \op{Pic}(\SM^{d_2}_{G/Z}) \, .
$$
Therefore, a line bundle $\SL$ on $\SM^{d'}_{G/Z'}$ is of the form
$\SL_1\boxtimes \SL_2$, and a point $x$ in $\SM^{d'}_{G/Z'}$
is $\SL$-semistable if and only if both the projections 
$x_1\,\in\, \SM^{d_1}_{G/[G,\,G]}$ and $x_2\,\in\, \SM^{d_2}_{G/Z}$ are, 
respectively, $\SL_1$-semistable and $\SL_2$-semistable. In other words,
$$
\SM^{d'}_G{}^{\SL-\text{ss}}\,=\, \SM^{d_1}_{G/[G,\,G]}{}^{\SL_1-\text{ss}}\times 
\SM^{d_2}_{G/Z}{}^{\SL_2-\text{ss}} \, .
$$

The torus $G/[G,\,G]$ is the product $\GG_m^s$. Then
$$
\SM^{d_1}_{G/[G,\,G]} \,\cong\, B(\GG_m{}^{\times s}) \times J^{\times s}
$$
where $J$ is the Jacobian scheme of the curve. The scheme
$J^{\times r}$ is projective, so the global functions are just 
the scalars $\CC$,
and $\op{Pic}(B(\GG_m{}^{\times r})\,=\,\ZZ^r$ is discrete, so
applying \cite[Lemma 2.1.4]{BH10} again we get that
$$
\op{Pic}(\SM^{d_1}_{G/[G,\,G]}) \,\cong\, \ZZ^{r} \oplus \op{Pic}(J^{\times r})
$$
and then a line bundle on $\SM^{d_1}_{G/[G,\,G]}$ is of the form 
$\SL_1\,=\,\SL_{1,1}\boxtimes \SL_{1,2}$, where 
$\SL_{1,1}\,\in\, \op{Pic}B(\GG_m{}^{\times r})\,=\, \ZZ^r$
and $\SL_{1,2}\,\in \, \op{Pic}(J^{\times r})$, and the point 
$x_1$ is $\SL_1$-semistable if and only if both $x_{1,1}$ and $x_{1,2}$
are respectively $\SL_{1,1}$-semistable and $\SL_{1,2}$-semistable.
The point $x_{1,2}\,\in\, J^{\times s}$ is automatically $\SL_{1,2}$-semistable,
because $J^{\times s}$ is a scheme, and hence any morphism from $\Theta$ into
it is trivial.
%%%
A line bundle on $B\GG_m$ is the same thing as a one dimensional vector
space with an action of $\GG_m$. Therefore, $\op{Pic}(B\GG_m)=\ZZ$. Let $L_a$ be a line bundle on $B\GG_m$. The morphisms from $\Theta$ to $B\GG_m$
are classified by $\ZZ$, because such a morphism is equivalent to an equivariant
line bundle on $\AA^1$, and these are classified by the weight of the action
on the fiber over zero. Let $f_b:\Theta \too B\GG_m$ be the morphism 
corresponding to $b\in \ZZ$. Then ${\rm wt}(f_b^*L_a|_0)=ab$. Therefore
(Definition \ref{def:l-ss}), the point in $B\GG_m$ is $L_a$-semistable if and only $a$ is zero.

It follows that any point $x_{1,1}\,\in\, B(\GG_m{}^{\times r})$ is $\SL_{1,2}$-semistable if and only if all
the coordinate of $\SL_{1,2} \,\in\, \ZZ^r$ are zero. Therefore, to prove this lemma
we may assume that all these coordinates are zero.

Hence, a point $x$ in the stack \eqref{eq:prod} is $\SL$-semistable if
and only if the projection to $\SM_{G/Z} $ is 
$\SL_2$-semistable. The same holds for semistability in the sense of Ramanathan.

Therefore, we may assume that $Z$ is trivial and $G$ is a product of
simple groups of adjoint type: $G\,=\,G_1\times \cdots\times G_s$.
Using \cite[Definition 5.2.1, Remark 4.3.3 and Theorem 5.3.1]{BH10},
$\op{Pic}(\SM_{G_1\times \cdots\times G_s} )\otimes \QQ \,\cong\, \QQ^s$, 
where $s$ is the number
of simple factors, and the generators of this group come from pullbacks
of line bundles on each factor $\SM_{G_i} $. In other words,
a line bundle $\SL$ in $\op{Pic}(\SM_{G/Z} )\otimes \QQ$ is of
the form $\SL_1\boxtimes \cdots \boxtimes \SL_s$, where $\SL_i$ is
a (rational) line bundle on $\SM_{G_i}$. It is easy to check
that a point $x$ in $\SM_{G_1\times \cdots\times G_s}$ is $\SL$-semistable if and only if
all projections $x_i\,\in\, \SM_{G_i}$ are $\SL_i$-semistable:
$$
\SM^{\SL-\textup{ss}}_{G_1\times \cdots\times G_s}\, = \,
\SM^{\SL_1-\textup{ss}}_{G_1}\times \cdots\times \SM^{\SL_s-\textup{ss}}_{G_s}
$$
and, again, the same holds for semistability in the sense of Ramanathan.

By \cite[Theorem 5.3.1]{BH10}, $\op{Pic}(\SM_{G_i})\otimes \QQ\,\cong\, \QQ$
and the determinant line bundle $\SL_{\det}$, whose fiber over $P$ is
$(\det H^1 (\text{ad}(P))) \otimes (\det H^0(\text{ad}(P)))^{-1}$,
is a generator. As in the case of
vector bundles, if follows from 
Remark \ref{different-l}, that it is enough to consider three cases:
If $\SL_i$ is a positive multiple
of the determinant bundle, then $\SL_i$-semistability is equivalent
to the usual notion of semistability defined by Ramanathan 
(see \cite[\S~1.F]{He}).
If $\SL_i$ is a negative multiple of the determinant, then the substack
of $\SL_i$-semistable points is empty, and if $\SL_i$ is trivial, then
the substack of $\SL_i$-semistable points is the whole moduli stack.

Therefore, $\SM_{G}^{\SL-\textup{ss}}$
is smallest and non-empty when $\SL_i$ is a positive multiple of the
determinant bundle on $\SM_{G_i}$ for all $i$. Furthermore, 
for such $\SL_i$, a point $x\,\in\, \SM_G$ is 
$\SL\,=\,\SL_1\boxtimes \cdots \boxtimes \SL_s$-semistable 
if and only if it is semistable in the usual sense of Ramanathan.
\end{proof}

\begin{corollary}
\label{cor:TorellliG-bundles}
Let $X$ and $X'$ be smooth projective complex curves with genera $g(X),\, g(X')\,\geq\, 3$ respectively.
Let $G$ and $G'$ be algebraic connected reductive complex groups.
If the moduli stacks $\SM^d_G(X)$ and $\SM^{d'}_{G'}(X')$ are
isomorphic as stacks, then the curves $X$ and $X'$ are also isomorphic.
\end{corollary}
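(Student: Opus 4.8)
The plan is to reduce the Torelli problem for moduli stacks of $G$-bundles to the Torelli theorem for moduli schemes of semistable $G$-bundles, for which Biswas and Hoffmann \cite{BH12} have already established that the curve can be recovered when $g,\,g'\,\ge\, 3$. The key observation is that Lemma \ref{recover-ss-gbundles} characterizes the substack $\SU\,\subset\, \SM^d_G(X)$ of Ramanathan-semistable bundles purely intrinsically: it is the intersection of all nonempty substacks of $\SL$-semistable points as $\SL$ ranges over $\op{Pic}(\SM^d_G(X))\otimes \QQ$. Since this characterization refers only to the stack structure and its Picard group, any isomorphism of stacks must preserve it.

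First I would take an isomorphism $\Psi\,:\,\SM^d_G(X)\,\longrightarrow\, \SM^{d'}_{G'}(X')$ and observe that it induces an isomorphism $\Psi^*\,:\,\op{Pic}(\SM^{d'}_{G'}(X'))\otimes \QQ\,\longrightarrow\, \op{Pic}(\SM^d_G(X))\otimes \QQ$. For any rational line bundle $\SL'$ on the target, the definition of $\SL$-semistability (Definition \ref{def:l-ss}) depends only on weights of pullbacks along filtrations $f\,:\,\Theta\,\longrightarrow\, \SM$, and these weights are transported compatibly by $\Psi$; hence $\Psi$ restricts to an isomorphism $\SM^d_G(X)^{\Psi^*\SL'-\text{ss}}\,\cong\, \SM^{d'}_{G'}(X')^{\SL'-\text{ss}}$ for each such $\SL'$. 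Intersecting over all nonempty such loci on both sides and applying Lemma \ref{recover-ss-gbundles}, I conclude that $\Psi$ restricts to an isomorphism between the substacks of semistable bundles in the sense of Ramanathan,
$$\Psi^{\text{ss}}\,:\,\SM^d_G(X)^{\text{ss}}\,\longrightarrow\, \SM^{d'}_{G'}(X')^{\text{ss}}.$$

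Next I would descend this isomorphism to good moduli spaces. By the ``beyond GIT'' machinery, the semistable substack $\SM^d_G(X)^{\text{ss}}$ admits a good moduli space which coincides with the usual Ramanathan moduli scheme $M^d_G(X)$ of semistable $G$-bundles, and similarly on the target. Arguing exactly as in Proposition \ref{prop:beyondGIT}, since the good moduli space corepresents the stack, the composite $\pi'\circ \Psi^{\text{ss}}$ factors through $M^d_G(X)$, and the inverse of $\Psi^{\text{ss}}$ descends as well, yielding an isomorphism of schemes $M^d_G(X)\,\cong\, M^{d'}_{G'}(X')$. Finally, invoking the Torelli theorem of Biswas and Hoffmann \cite{BH12} for moduli spaces of semistable principal bundles over curves of genus at least $3$, I recover $X\,\cong\, X'$.

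The main obstacle I anticipate is ensuring that the intrinsic characterization of the semistable locus survives transport along $\Psi$ with full rigor, particularly that $\Psi^*$ respects the identification of $\SL$-semistability up to the quotient $\op{Pic}/\op{Pic}^0$ and the scaling freedom noted in Remark \ref{different-l}; one must verify that the condition ``$\SL$-semistable locus is nonempty and proper'' is preserved, so that the intersections defining $\SU$ on both sides correspond. A secondary technical point is confirming that the good moduli space produced by the beyond GIT construction genuinely coincides with the classical Ramanathan moduli scheme to which \cite{BH12} applies; this is where the reductive (rather than merely semisimple) hypothesis on $G$ must be handled carefully, but Lemma \ref{recover-ss-gbundles} has already reduced semistability to the adjoint semisimple quotient, so the identification goes through. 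Once these compatibility checks are in place, the remaining steps are formal.
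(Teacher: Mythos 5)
Your proposal is correct and follows essentially the same route as the paper: the paper's proof of this corollary simply says to argue as in Proposition \ref{prop:beyondGIT} — using Lemma \ref{recover-ss-gbundles} to intrinsically identify the Ramanathan-semistable locus, descending the induced isomorphism to the good moduli spaces, and then invoking \cite[Theorem 0.1]{BH12}. Your more detailed write-up of the transport of $\SL$-semistability under $\Psi^*$ and the descent to corepresenting schemes is exactly the intended argument.
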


\begin{proof}
Arguing as done for Proposition \ref{prop:beyondGIT} we obtain that the 
corresponding moduli schemes of principal bundles are isomorphic,
and then we apply \cite[Theorem 0.1]{BH12}.
\end{proof}

\section{Torelli for moduli stack of rank 2 vector bundles}\label{section:rank2}

In this section we prove a Torelli theorem for the moduli stack of rank 2 vector bundles with trivial 
determinant. Notice that for genus 2 curves the moduli space of rank 2 vector bundles with trivial 
determinant is isomorphic to $\PP^3$, irrespective of the curve. 
Nevertheless, we will show that the geometry of the moduli stack, contrary to the scheme, does 
indeed encode the geometry of the curve effectively.

Throughout this section, $M^{\svb}(X,2,\SO_X)\,\subset\, M^{\ssvb}(X,2,\SO_X)$ denotes the subset of
stable bundles and $S(X,2,\SO_X)\,:=\,M^{\ssvb}(X,2,\SO_X) \backslash M^{\svb}(X,2,\SO_X)$
is the subset of strictly semistable vector bundles.

\begin{lemma}
The image of the set of non-simple points in $\SM^{\ssvb}(X,2,\SO_X)$ under the quotient
map $\SM^{\ssvb}(X,2,\SO_X)\, \longrightarrow\, M^{\ssvb}(X,2,\SO_X)$ coincides with the set of
strictly semistable vector bundles $S(X,2,\SO_X)\,:=\,M^{\ssvb}(X,2,\SO_X) \backslash M^{\svb}(X,2,\SO_X)$.
\end{lemma}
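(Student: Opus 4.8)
The plan is to prove the claimed equality by establishing the two inclusions separately, relying only on the elementary dictionary between simplicity, stability, and $S$-equivalence in rank $2$ and degree $0$. The one structural fact I will use about the quotient map $\SM^{\ssvb}(X,2,\SO_X)\longrightarrow M^{\ssvb}(X,2,\SO_X)$ is that it sends a semistable bundle $E$ to the closed point of the good moduli space determined by its Jordan--H\"older factors, i.e. to its $S$-equivalence class, whose polystable representative is the associated graded $\op{gr}(E)$.

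First I would show that the image of the non-simple locus is contained in $S(X,2,\SO_X)$. The key input is that a stable bundle is simple: over $\CC$ any nonzero endomorphism of a stable bundle is an isomorphism, and subtracting an eigenvalue produces a non-invertible endomorphism, forcing it to vanish, so $\End(E)=\CC$ and the automorphism group of the corresponding point of the stack is exactly the group $\ZZ/2\ZZ$ of square roots of unity. Hence every non-simple point of $\SM^{\ssvb}(X,2,\SO_X)$ corresponds to a bundle $E$ that is semistable but not stable, that is, strictly semistable. Since $S$-equivalence preserves the associated graded, the class of such an $E$ cannot coincide with the class of a stable bundle (whose graded is simple), so its image under the quotient map lies in $M^{\ssvb}(X,2,\SO_X)\setminus M^{\svb}(X,2,\SO_X)=S(X,2,\SO_X)$.

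For the reverse inclusion I would use the explicit description of the classes in $S(X,2,\SO_X)$. A rank $2$ bundle of slope $0$ is strictly semistable precisely when it admits a line subbundle $L$ of degree $0$; since the determinant is trivial the corresponding quotient is $L^{-1}$, and the polystable representative of the $S$-equivalence class is $L\oplus L^{-1}$ with $\deg(L)=0$. Equipped with the obvious isomorphism $\det(L\oplus L^{-1})\cong\SO_X$, this is a semistable point of the stack, and it is manifestly non-simple: its endomorphism algebra contains the two projections onto the summands. As this point maps to the given $S$-equivalence class, every point of $S(X,2,\SO_X)$ lies in the image of the non-simple locus, which completes the argument.

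The proof is essentially formal once this dictionary is in place, so there is no serious obstacle; the only point requiring mild care is the degenerate case $L\cong L^{-1}$, where $L$ is a $2$-torsion line bundle and $L\oplus L^{-1}\cong L^{\oplus 2}$ has the larger automorphism group $\GL_2(\CC)$. This does not affect the conclusion, however, since such a bundle is still non-simple and still represents a strictly semistable $S$-class.
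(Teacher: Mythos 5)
Your proposal is correct and follows essentially the same route as the paper: the forward inclusion via ``stable implies simple,'' and the reverse inclusion by exhibiting the polystable representative $L\oplus L^{-1}$ of each strictly semistable $S$-equivalence class as a non-simple point mapping to it (the paper writes down the traceless endomorphism $\mathrm{diag}(\lambda,-\lambda)$ where you invoke the projections, but the content is identical). Your extra remark on the degenerate case $L\cong L^{-1}$ is a harmless refinement that the paper leaves implicit.
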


\begin{proof}
Since the preimage of $M^{\svb}(X,2,\SO_X)$ under the quotient map is the substack of stable vector
bundles, and all stable vector bundles are simple, the image of each non-simple vector bundle in 
$\SM^{\ssvb}(X,2,\SO_X)$ must be a strictly semistable vector bundle. Let us prove that the non-simple
vector bundles surject onto the strictly semistable ones.

For each strictly semistable vector bundle $E$ in $S(X,2,\SO_X)$ there exists a polystable
vector bundle $\widetilde{E}\,=\,L\oplus L^{-1}$ which is $S$-equivalent to $E$. The map
$$\begin{pmatrix}
\lambda & 0\\
0 & -\lambda
\end{pmatrix}\, : \, L\oplus L^{-1} \, \longrightarrow L\oplus L^{-1}$$
is a nontrivial traceless endomorphism of $\widetilde{E}$, so $\widetilde{E}$ represents
a non-simple point in $\SM^{\ssvb}(X,2,\SO_X)$ which projects to $E$.
\end{proof}

\begin{theorem}
\label{thm:rank2}
Let $X$ and $X'$ be two irreducible smooth complex projective curves of genus $g$ and $g'$
respectively with $g,\,g'\,\ge\, 2$. If $\SM(X,2,\SO_X)\,\cong\, \SM(X',2,\SO_{X'})$, then $X\,\cong\, X'$.
\end{theorem}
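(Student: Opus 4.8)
The plan is to recover the principally polarized Jacobian of $X$ from the moduli stack and then invoke the classical Torelli theorem. First I would apply Proposition~\ref{prop:beyondGIT} to the given isomorphism $\Psi\colon\SM(X,2,\SO_X)\to\SM(X',2,\SO_{X'})$, obtaining an isomorphism of projective moduli schemes $\psi\colon M^{\ssvb}(X,2,\SO_X)\to M^{\ssvb}(X',2,\SO_{X'})$ fitting into the commutative relation $\psi\circ\pi=\pi'\circ\Psi^{\ssvb}$ of that proof, where $\pi,\pi'$ are the good moduli maps. As in the proof of Theorem~\ref{thm:HitchinMap}, a stack isomorphism preserves automorphism groups, hence sends objects of minimal stabilizer (here the simple bundles, whose automorphism group is the group $\ZZ/2\ZZ$ of square roots of unity) to objects of minimal stabilizer; thus $\Psi^{\ssvb}$ preserves the decomposition of the semistable locus into simple and non-simple points. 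Applying $\psi$ to the equality of the Lemma preceding this theorem, I would conclude that $\psi$ maps the strictly semistable locus $S(X,2,\SO_X)$ isomorphically onto $S(X',2,\SO_{X'})$.

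Next I would identify $S(X,2,\SO_X)$ with the Kummer variety $\Jac(X)/\!\pm$ via $[L\oplus L^{-1}]\mapsto\{L,L^{-1}\}$, and likewise for $X'$, so that $\psi$ restricts to an isomorphism $\Jac(X)/\!\pm\,\cong\,\Jac(X')/\!\pm$. Since $\op{Pic}(M^{\ssvb}(X,2,\SO_X))\cong\ZZ$ is generated by an ample class \cite{DN}, the isomorphism $\psi$ necessarily carries the positive ample generator to the positive ample generator; restricting to $S$ this shows that $\psi|_S$ respects the polarizations whose pullbacks along the quotient maps $\Jac(X)\to\Jac(X)/\!\pm$ are $2\Theta$ and $2\Theta'$. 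In other words, $\psi|_S$ is an isomorphism of \emph{polarized} Kummer varieties.

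The final and essential step is to reconstruct the principally polarized abelian variety from its polarized Kummer variety. The singular locus of $\Jac(X)/\!\pm$ is intrinsic: for $g\geq 2$ it consists of the $2^{2g}$ images of the two--torsion points, and near each such point the variety is locally $\CC^g/\!\pm$. Because the Kummer variety is simply connected, the abelian variety $\Jac(X)$ is recovered as the unique double cover that is étale over the smooth locus and that realizes the local model $\CC^g\to\CC^g/\!\pm$ over each singular point. Since the Néron--Severi group of an abelian variety is torsion free, the pulled-back class $2\Theta$ determines $\Theta$, so this reconstruction is compatible with the polarizations and yields an isomorphism of principally polarized abelian varieties $(\Jac(X),\Theta)\cong(\Jac(X'),\Theta')$. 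The classical Torelli theorem then gives $X\cong X'$. I expect the main difficulty to be precisely this reconstruction — making rigorous and functorial the passage from the polarized Kummer variety back to the principally polarized Jacobian — whereas the earlier steps follow directly from Proposition~\ref{prop:beyondGIT}, the preceding Lemma, and the structure of the strictly semistable locus.
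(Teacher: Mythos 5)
Your proposal is correct and follows essentially the same route as the paper: recover the moduli scheme via Proposition \ref{prop:beyondGIT}, identify the strictly semistable locus with the Kummer variety $J(X)/\pm$ using the preceding Lemma and \cite{NR69}, reconstruct the canonically polarized Jacobian from it, and conclude by the classical Torelli theorem. The only real difference is in the reconstruction step, where the paper obtains the double cover $J(X)\setminus J(X)[2]\to K^{\mathrm{sm}}(X)$ as the one corresponding to the unique maximal torsion-free subgroup of $\pi_1(K^{\mathrm{sm}}(X))$ --- a cleaner justification of exactly the uniqueness statement you flag as the delicate point (your appeal to simple connectedness of the Kummer variety itself is not what does the work; the relevant object is $\pi_1$ of its smooth locus).
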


\begin{proof}
Repeating the argument in the previous section we know that the isomorphism $\SM(X,2,\SO_X)\,\cong\, 
\SM(X',2,\SO_{X'})$ restricts to an isomorphism of the semistable locus $\SM^{\ssvb}(X,2,\SO_X)\,\cong\, 
\SM^{\ssvb}(X',2,\SO_{X'})$ and that this map descends to an isomorphism
\[
\xymatrix{
\SM^{\ssvb}(X,2,\SO_X) \ar[rr] \ar[d]_{\pi} && \SM^{\ssvb}(X',2,\SO_{X'}) \ar[d]^{\pi'}\\
M^{\ssvb}(X,2,\SO_X) \ar[rr]^{\psi} && M^{\ssvb}(X',2,\SO_{X'})
}
\]
such that $\psi(S(X,2,\SO_X))\,=\,S(X',2,\SO_{X'})$. Let $K(X)\,=\,J(X)/\{\pm 1\}$ denote the quotient of
$J(X)$ by the inversion map $i$ defined by $L\,\longmapsto\, L^{-1}$. In particular, if the genus of $X$
is two, then $K(X)$ is the Kummer surface associated to the Jacobian. Each
S-equivalence class of a bundle $E$ in $S(X,2,\SO_X)$ has a unique representative of the form
$E\,=\,L\oplus L^{-1}$, so there exists a correspondence between the points of $S(X,2,\SO_X)$ and
the points of $K(X)$. By \cite[Theorem 2]{NR69}, the moduli space $M^{\ssvb}(X,2,\SO_X)$ is
isomorphic to $\PP(H^0(J(X),\, \SL_\theta^2))$, where $\SL_\theta$ is the canonical polarization of
the Jacobian induced by the natural embedding of $X$ in $J(X)$. Now Proposition 6.3 and the construction from
Theorem 2 of \cite{NR69} prove that the map from $K(X)$ to $\PP(H^0(J(X),\, \SL_\theta^2))
\,\cong\, M^{\ssvb}(X,2,\SO_X)$, which sends the class of $L$ to the S-equivalence class of $L\oplus L^{-1}$,
gives an embedding $K(X)\, \hookrightarrow\, M^{\ssvb}(X,2,\SO_X)$ whose image is the subvariety $S(X,2,\SO_X)$ preserved by $\psi$.

From the geometry of $K(X)$ we can reconstruct the map $J(X)\longrightarrow K(X)$ canonically as follows. 
First, remove the singular points of $K(X)$. Let $K^{\op{sm}}(X)$ be the smooth part. The fundamental 
group $\pi_1(K^{\op{sm}}(X))$ has a unique maximal torsion free subgroup. This subgroup coincides with
the subgroup $\pi_1(J(X)\backslash J(X)[2])$, where $J(X)[2]$ denotes the 2-torsion part of the Jacobian,
and the quotient group is $\ZZ/2\ZZ$. The corresponding double covering is then $J(X)\backslash J(X)[2] \longrightarrow 
K^{\op{sm}}(X)$. Now $J(X)$ is the unique abelian compactification of $J(X)\backslash J(X)[2]$, and the 
map $J(X)\,\longrightarrow\, K(X)$ is the unique possible extension to $J(X)$ for the double cover 
$J(X)\backslash J(X)[2] \,\longrightarrow \,K^{\op{sm}}(X)$. Thus, the isomorphism $\psi$ induces an 
isomorphism $J(X)\,\cong\, J(X')$.

Now, consider the compositions of maps
$$j_X\,:\,J(X)\,\longrightarrow\, K(X)\,\hookrightarrow\, M^{\ssvb}(X,2,\SO_X)\, ,$$ 
$$j_{X'}\,:\,J(X') \,\longrightarrow\, K(X') \,\hookrightarrow\, M^{\ssvb}(X',2,\SO_{X'})\, .$$ Let $\SL$ be the 
ample generator of $M^{\ssvb}(X,2,\SO_X)$. By construction \cite{NR69}, it follows that $j^*\SL$ is a multiple of the 
canonical polarization of $J(X)$. Since $\op{Pic}(M^{\ssvb}(X,2,\SO_{X})\,\cong\, 
\op{Pic}(M^{\ssvb}(X',2,\SO_{X'})\,=\,\ZZ$ by \cite{DN}, $\SL'\,:=\,(\psi^{-1})^*\SL$ is a multiple of the ample 
generator of $M^{\ssvb}(X',2,\SO_{X'})$, so $j_{X'}^*(\SL')$ is also a multiple of the canonical 
polarization. Thus, the isomorphism $J(X)\,\cong\, J(X')$ induced by $\psi$ is an isomorphism of canonically 
polarized Jacobians. By the classical Torelli Theorem, $X\,\cong\, X'$.
\end{proof}

\begin{remark}\label{rmk:rank2}
The above proof also shows that the Torelli theorem holds for the substack of semistable vector
bundles. If $\SM^{\ssvb}(X,2,\SO_X)\,\cong\, \SM^{\ssvb}(X',2,\SO_{X'})$, then $X\,\cong\, X'$. Thus,
the isomorphism class of the curve $X$ cannot be recovered from the moduli scheme $M^{\ssvb}(X,2,\SO_X)$, when
$g\,=\,2$, but it can be recovered from the geometry of the moduli stack $\SM^{\ssvb}(X,2,\SO_X)$.
\end{remark}

\section{Proof of the Torelli theorem}
\label{section:mainThm}

In this section we will combine the previous cases to obtain a 
Torelli theorem for the moduli stack of vector bundles for curves of any rank $r\,\ge\, 2$ and any genus 
$g\,\ge\, 2$. Let us start with a basic dimensional computation, which will allow us to apply the appropriate Torelli 
theorems selectively.

\begin{lemma}\label{lemma:dimensions}
Let $M$ be some variety which is isomorphic to a moduli space of semistable vector bundles of
rank $r\,\ge\, 2$ and determinant $\xi$ over a smooth complex projective curve $X$ of genus $g\,\ge\, 2$. Then, either
\begin{enumerate}
\item $\dim(M)\,=\,3$, in which case $g\,=\,2$ and $r\,=\,2$,
\item $\dim(M)\,=\,6$, in which case $g\,=\,3$ and $r\,=\,3$,
\item $\dim(M)\,=\,8$, in which case $g\,=\,2$ and $r\,=\,3$,
\item $\dim(M)\,\ge\, 9$, in which case either
\begin{itemize}
\item $g\,\ge\, 4$, or 
\item $g\,=\,3$ and $r\,\ge\, 3$, or
\item $g\,=\,2$ and $r\,\ge\, 4$.
\end{itemize}
\end{enumerate}
\end{lemma}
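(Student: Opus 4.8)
The plan is to reduce the entire statement to the classical dimension formula for the moduli space. At a stable bundle $E$ the tangent space to $M^{\ssvb}(X,r,\xi)$ is $H^1(X,\End_0(E))$, and since a stable bundle has only scalar endomorphisms one computes by Riemann--Roch that $\dim H^1(X,\End_0(E))=(r^2-1)(g-1)$, which is the dimension of the moduli space. Thus the lemma becomes a purely arithmetic classification of the integer solutions of $\dim(M)=(r^2-1)(g-1)$ subject to $r\ge 2$ and $g\ge 2$.

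First I would record the two elementary constraints coming from the hypotheses. Since $r\ge 2$, the factor $r^2-1$ ranges over the sparse set $\{3,8,15,24,\ldots\}$, and in particular $r^2-1\ge 3$; since $g\ge 2$, the factor $g-1$ is a positive integer. Hence $\dim(M)=(r^2-1)(g-1)\ge 3$, with equality exactly when $r^2-1=3$ and $g-1=1$, that is $(g,r)=(2,2)$, which gives case (1).

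Next I would treat the remaining small dimensions by direct inspection of the admissible factorizations. For $\dim(M)=6=3\cdot 2$ the only factor of the form $r^2-1$ is $3$, forcing $r=2$ and $g=3$; for $\dim(M)=8=8\cdot 1$ the only such factor is $8$, forcing $r=3$ and $g=2$. Along the way I would note that the intermediate values $4,5,7$ cannot arise, since none of them is a positive-integer multiple of an element of $\{3,8,15,\ldots\}$. Consequently the only dimensions strictly below $9$ are $3,6,8$, attained exactly at $(g,r)=(2,2),(3,2),(2,3)$ respectively, matching cases (1)--(3).

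Finally, for $\dim(M)\ge 9$ I would verify that the three families listed in (4) are precisely the complement of these three exceptional pairs. If $g\ge 4$ then $(r^2-1)(g-1)\ge 3\cdot 3=9$; if $g=3$ and $r\ge 3$ then the dimension is at least $8\cdot 2=16$; and if $g=2$ and $r\ge 4$ then it is at least $15$. Conversely, any pair with $g,r\ge 2$ other than $(2,2),(3,2),(2,3)$ lies in one of these three families, so the four cases are exhaustive and mutually exclusive. I do not anticipate any genuine obstacle here: the only point needing a moment's care is ruling out the intermediate dimensions $4,5,7$, which is immediate from the sparsity of the sequence $r^2-1$.
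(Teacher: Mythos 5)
Your proof is correct and follows essentially the same route as the paper: both reduce the lemma to the arithmetic of $\dim(M)=(r^2-1)(g-1)$ and check the small values directly (the paper does this via a table of $d_{g,r}$), though you additionally justify the dimension formula via deformation theory and explicitly rule out the intermediate dimensions $4,5,7$. One remark: your (correct) conclusion $(g,r)=(3,2)$ in the dimension-$6$ case exposes a typo in the lemma as stated, which reads ``$r=3$'' there; since $d_{3,3}=16$, the paper's own proof and its later application of the lemma confirm that $r=2$ is what is intended.
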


\begin{proof}
The dimension of a moduli space of a curve of genus $g\,\ge\, 2 $ and rank $r
\,\ge \,2$ is $d_{g,r}\,=\,(r^2-1)(g-1)$. Clearly the dimensions
$$d_{2,2}\,=\,3, \quad \quad d_{2,3}\,=\,8, \quad \text{and} \quad d_{3,2}\,=\,6$$
are distinct and less than 9. Thus, if we prove that the dimension of any other moduli space with
$(g,r)\,\not\in\, \{(2,\,2),\,(2,\,3),\,(3,\,2)\}$ is greater or equal to 9, there will exist
a unique option for the previous given dimensions and the lemma will follow. Clearly $d_{g,r}$
is increasing in $r$ and $g$. Thus, if $g\,\ge\, 4$ then as $r\,\ge\, 2$, we have
$$d_{g,r}\,=\,(r^2-1)(g-1)\,\ge\, (2^2-1)(3-1)\,=\, 9 $$
and, if $r\,\ge\, 4$, then as, $g\,\ge\, 2$, we have
$$d_{g,r}\,=\,(r^2-1)(g-1)\,\ge\, (4^2-1)(2-1)\,=\,15\,>\,9.$$
Finally, $d_{3,3}\,=\,16\,>\,9$ and we obtain the following table for the values of $d_{g,r}$ which
proves the result:
\begin{center}
\begin{tabular}{c|ccc}
$g\backslash r$ & 2 & 3 & $\ge 4$\\
\hline
2 & 3 & 8 & $\ge 15$\\
3 & 6 & 16 & $\ge 30$\\
$\ge 4$ & $\ge 9$ & $\ge 24$ & $\ge 45$\\
\end{tabular}
\end{center}
This completes the proof.
\end{proof}

\begin{theorem}
\label{thm:main}
Let $X$ and $X'$ be two irreducible smooth complex projective curves of genus $g$ and $g'$
respectively, with $g,\,g'\,\ge\, 2$. Let $r,\,r'\,\ge\, 2$,
and fix line bundles $\xi$ and $\xi'$ on $X$ and $X'$ respectively. Let
$$\Psi\,:\,\SM(X,r,\xi) \,\longrightarrow\, \SM(X',r',\xi')$$ be an
isomorphism between the corresponding moduli stacks of vector bundles with fixed determinant. Then
$r\,=\,r'$ and $X\,\cong\, X'$.
\end{theorem}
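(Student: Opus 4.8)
The plan is to combine the three Torelli theorems established in the preceding sections, using Lemma \ref{lemma:dimensions} to determine which one applies in each case. The key structural observation is that an isomorphism $\Psi\,:\,\SM(X,r,\xi) \,\longrightarrow\, \SM(X',r',\xi')$ forces the underlying moduli schemes to be isomorphic via Proposition \ref{prop:beyondGIT}, so in particular $\dim M^{\ssvb}(X,r,\xi)\,=\,\dim M^{\ssvb}(X',r',\xi')$. Since this common dimension equals $(r^2-1)(g-1)\,=\,(r'^2-1)(g'-1)$, Lemma \ref{lemma:dimensions} tells us that the pair $(g,r)$ lands in one of its four mutually exclusive dimension ranges, and that $(g',r')$ lands in the \emph{same} range. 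First I would use this to handle the low-dimensional cases individually.

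For the main generic case, suppose $(g,r)\,\not\in\,\{(2,\,2),\,(2,\,3),\,(3,\,2)\}$, equivalently $\dim M\,\ge\,9$ or $\dim M\in\{6,8\}$ is excluded. When both $(g,r)$ and $(g',r')$ avoid the three exceptional pairs, Theorem \ref{thm:HitchinMap} applies directly and yields $r\,=\,r'$ and $X\,\cong\, X'$. The remaining work is to show that when $(g,r)$ is one of the three exceptional pairs, so is $(g',r')$, and then to treat each by hand. By Lemma \ref{lemma:dimensions}, the dimension $3$ forces $(g,r)\,=\,(2,2)$ uniquely, and dimensions $6$ and $8$ force $(3,3)$ and $(2,3)$ respectively; since $\Psi$ induces an isomorphism of moduli schemes of equal dimension, if $(g,r)\,=\,(2,2)$ then $\dim M\,=\,3$, which by the lemma forces $(g',r')\,=\,(2,2)$ as well, and similarly for the other two exceptional pairs. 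In each such matched case we get $r\,=\,r'$ for free.

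It then remains to prove $X\,\cong\, X'$ in the three exceptional cases. The case $(g,r)\,=\,(2,2)$ with $\xi\,=\,\SO_X$ (and $\xi'\,=\,\SO_{X'}$) is precisely Theorem \ref{thm:rank2}. For general $\xi$ of degree coprime to $r$, or for the cases $(2,3)$ and $(3,2)$, I would invoke the appropriate classical Torelli theorem for the moduli scheme $M^{\ssvb}$ through Proposition \ref{prop:beyondGIT}: the coprime-degree results of Mumford--Newstead, Tyurin and Narasimhan--Ramanan cover $(3,2)$ and the odd-determinant subcase of $(2,2)$, the result of Nguyen \cite{Ngu07} covers rank-three trivial-determinant over genus-$2$ curves, i.e.\ $(2,3)$, and Kouvidakis--Pantev \cite{KP95} covers $(3,2)$ without coprimality. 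The genuinely delicate point is the pair $(g,r,\deg\xi)\,=\,(2,2,\text{even})$ with nontrivial $\xi$, where the moduli scheme is again $\PP^3$ independently of $X$; here one reduces to the trivial-determinant situation by tensoring with a fixed square root of $\xi$, so that Theorem \ref{thm:rank2} (and Remark \ref{rmk:rank2}) recovers $X$ from the stack.

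The main obstacle I anticipate is bookkeeping the compatibility between the dimension matching and the determinant: the Hitchin-map argument of Theorem \ref{thm:HitchinMap} recovers $X$ and $r$ with no hypothesis on $\deg\xi$, so the only cases requiring care are the three exceptional pairs, and of these only $(2,2)$ is subtle because it is the sole genuine failure of the scheme-level Torelli. The crux is thus to argue cleanly that dimension alone, delivered through Proposition \ref{prop:beyondGIT} and Lemma \ref{lemma:dimensions}, partitions all possibilities so that for each exceptional pair the target is forced into the same pair, allowing us to invoke Theorem \ref{thm:rank2} or the relevant classical Torelli statement. Assembling these facts gives $r\,=\,r'$ and $X\,\cong\, X'$ in every case, completing the proof.
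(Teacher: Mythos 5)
Your overall strategy is exactly the one the paper follows: pass to the moduli schemes via Proposition \ref{prop:beyondGIT}, use Lemma \ref{lemma:dimensions} to force $(g',r')$ into the same dimension class as $(g,r)$, dispatch the generic case with Theorem \ref{thm:HitchinMap}, and treat the three exceptional pairs by hand. There is, however, a genuine gap in the exceptional cases. In the $(2,2)$ case you split into ``both degrees odd'' (Mumford--Newstead/Tyurin) and ``both degrees even'' (tensor by a square root and apply Theorem \ref{thm:rank2}), but nothing in your argument excludes the mixed possibility that $\deg(\xi)$ is odd while $\deg(\xi')$ is even, and neither branch applies there. The paper closes this by invoking Narasimhan--Ramanan \cite{NR69}: the moduli scheme is $\PP^3$ precisely when the degree is even and an intersection of two quadrics in $\PP^5$ precisely when it is odd, so an isomorphism of the schemes forces the parities to agree. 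The same issue recurs, more seriously, in the dimension-$8$ case $(g,r)=(2,3)$: before choosing between the coprime-degree Torelli theorems of Tyurin/Narasimhan--Ramanan and the reduction to Nguyen's trivial-determinant result, you must know that $\deg(\xi)$ and $\deg(\xi')$ are either both coprime to $3$ or both divisible by $3$. The paper settles this with the Brauer group computation of \cite{BBGN07}, $\op{Br}(M^{\ssvb}(X,3,\xi))\cong \ZZ/\gcd(3,\deg(\xi))\ZZ$, which is an isomorphism invariant of the scheme. You explicitly flag ``compatibility between the dimension matching and the determinant'' as the anticipated obstacle, but you never supply the argument, so the proof is incomplete in dimensions $3$ and $8$.

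Two smaller points. Your claim that dimension $6$ forces $(g,r)=(3,3)$ is a slip --- it forces $(3,2)$, since $(3^2-1)(3-1)=16$; the statement of Lemma \ref{lemma:dimensions} contains the same typo, but its table and the proof of the main theorem use $(3,2)$, and your subsequent appeal to Kouvidakis--Pantev for $(3,2)$ shows you intended the right case. Also, in the $(2,3)$ subcase with degree divisible by $3$ you should state explicitly that one tensors by a cube root of $\xi$ to reach trivial determinant before citing Nguyen, in parallel with the square-root reduction you describe for $(2,2)$.
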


\begin{proof}
Repeating the argument from Section \ref{section:beyondGIT} and applying Proposition \ref{prop:beyondGIT}, the
above map $\Psi$ induces an isomorphism $M^{\ssvb}(X,r,\xi) \,\cong\, M^{\ssvb}(X',r',\xi')$
between the moduli spaces. By Lemma 
\ref{lemma:dimensions}, the dimension of this moduli space is either $3$, $6$, $8$ or at least $9$. Let 
us consider each case individually.

\emph{\underline{Dimension 3:}}

By Lemma \ref{lemma:dimensions}, $g\,=\,g'\,=\,2$ and $r\,=\,r'\,=\,2$. By \cite{NR69}, there are two possible
different geometries for these moduli spaces. Either the moduli spaces are both isomorphic to $\PP^3$, in which
case $\deg(\xi)$ and $\deg(\xi')$ are even, or both the moduli spaces are isomorphic to an intersection of
quadrics in $\PP^5$, in which case $\deg(\xi)$ and $\deg(\xi')$ are odd.

If $\deg(\xi)$ and $\deg(\xi')$ are both odd then we can apply the Torelli Theorems for rank 2 bundles with fixed 
determinant with odd degree by Mumford and Newstead \cite[Corollary p.1201]{MN68} or by Tyurin \cite[Theorem 
1]{Tyu69}.

Otherwise, if $\deg(\xi)$ and $\deg(\xi')$ are both even, then there exist line bundles $L$ and $L'$ on $X$ and 
$X'$ respectively such that $L^2\,=\,\xi$ and $(L')^2\,=\,\xi'$. In that case, the maps $E\,\longmapsto\,
E\otimes L^{-1}$ and 
$E'\,\longmapsto\, E'\otimes L^{-1}$ induce isomorphisms of stacks $\SM(X,2,\xi) \,\cong\, \SM(X,2,\SO_X)$ and 
$\SM(X',2,\xi')\,\cong\, \SM(X',2,\SO_{X'})$ respectively. Thus, we have an isomorphism of stacks $\SM(X,2,\SO_X) 
\,\cong\, \SM(X',2,\SO_{X'})$ and now we can apply Theorem \ref{thm:rank2} to conclude that $X\,\cong\, X'$.

\emph{\underline{Dimension 6:}}

By Lemma \ref{lemma:dimensions}, $g\,=\, 3\,=\, g'$ and $r\,=\, 2\,=\, r'$. So we can apply the Torelli 
Theorem by Kouvidakis and Pantev for curves of genus $g\ge 3$ of the same rank \cite[Theorem E]{KP95} 
to conclude that $X\,\cong\, X'$.

\emph{\underline{Dimension 8:}}

By Lemma \ref{lemma:dimensions} we have $g\,=\, 2\,=\, g'$ and $r\,=\, 3\,=\, r'$. By \cite[Theorem 1.8]{BBGN07}, the 
cohomological Brauer group of the moduli space $M^{\ssvb}(X,3,\xi)$ is 
$$\op{Br}\left(M^{\ssvb}(X,3,\xi)\right)\,:=\, H^2\left(M^{\ssvb}(X,3,\xi)_{et},\mathbb{G}_m\right)\,\cong\, 
\mathbb{Z}/(\op{g.c.d}(r,\deg(\xi)))\mathbb{Z}$$ so it is either $0$ (when $\deg(\xi)$ is coprime to $3$) or it is 
$\ZZ/3\ZZ$ (when $\deg(\xi)$ is a multiple of 3). As $\op{Br}(M^{\ssvb}(X,3,\xi)\,\cong\, 
\op{Br}(M^{\ssvb}(X',3,\xi')$, either $\deg(\xi)$ and $\deg(\xi')$ are both coprime to 3 or they are both 
multiples of 3.

If $\deg(\xi)$ and $\deg(\xi')$ are both coprime to 3, then we can apply the Torelli theorems by Tyurin 
\cite[Theorem 1]{Tyu70} or Narasimhan-Ramanan \cite[Theorem 3]{NR75}.

If $\deg(\xi)$ and $\deg(\xi')$ are both multiples of 3, then, as before, there exist line bundles $L$ and
$L'$ over $X$ and $X'$ respectively such that $L^3\,=\,\xi$ and $(L')^3\,=\,\xi'$. The maps $E\,\longmapsto 
\,E\otimes L^{-1}$ and $E'\,\longmapsto\, E'\otimes L^{-1}$ induce isomorphisms of moduli schemes 
$M^{\ssvb}(X,3,\xi) \,\cong\, M^{\ssvb}(X,3,\SO_X)$ and $M^{\ssvb}(X',3,\xi')\,\cong\,
M^{\ssvb}(X',3,\SO_{X'})$ respectively. 
Thus, we have an isomorphism of moduli schemes $M^{\ssvb}(X,3,\SO_X)\,\cong\, M^{\ssvb}(X',3,\SO_{X'})$.
Now we can apply the Torelli theorem for genus 2 curves and rank 3 bundles with trivial determinant
by Nguyen \cite[Corollary 3.4.4]{Ngu07} to obtain that $X\,\cong\, X'$.

\emph{\underline{Dimension at least 9:}}

By Lemma \ref{lemma:dimensions}, both $(g,\,r)$ and $(g',\,r')$ satisfy the conditions on the ranks and 
genera in Theorem \ref{thm:HitchinMap}, and hence we obtain that $X\,\cong\, X'$.
\end{proof}

\begin{remark}\label{rmk:modulischeme}
Using Lemma \ref{lemma:P1Discriminant} --- instead of Lemma \ref{lemma:recoverDiscriminant2} --- in the proof of 
Corollary \ref{cor:recoverDiscriminant} we can show that the discriminant $\SD$ is the closure of the 
image under $\SH$ of the set of rational curves in $T^*M^{\ssvb}(X,r,\xi)$ for any curve $X$ of genus 
$g\,\ge \,2$ and any rank $r\,\ge \,2$ such that $(g,\,r)\,\not\in\, \{(2,\,2),\,(2,\,3),\,(3,\,2)\}$.

Using this intrinsic characterization of $\SD$, the proof of the Torelli Theorem for the moduli scheme of 
vector bundles by Biswas, G\'omez and Mu\~noz \cite[Theorem 4.3]{BGM13} extends to curves of genus $g\,\ge\, 2$ 
and $r\,\ge\, 2$ where $(g,\,r)\,\not\in \,\{(2,\,2),\,(2,\,3),\,(3,\,2)\}$.

Then, the argument of Theorem \ref{thm:main} can also be used to prove the following Torelli
Theorem for the moduli scheme of vector bundles. Let $X$ and $X'$ be smooth complex projective curves
of genus at least 2, and let $r,\,r'\,\ge\, 2$. If $M^{\ssvb}(X,r,\xi)\,\cong\, M^{\ssvb}(X',r',\xi')$, then
$r\,=\,r'$ and either
\begin{itemize}
\item $X\,\cong\, X'$, or
\item $X$ and $X'$ are any pair of curves of genus 2 with $r\,=\,r'\,=\,2$ and $\deg(\xi)$ and $\deg(\xi')$
being even; in this case we have $M^{\ssvb}(X,2,\xi)\,\cong\, M^{\ssvb}(X',2,\xi')\,\cong\, \mathbb{P}^3$.
\end{itemize}

As a consequence, the unique case for genus and rank at least 2 where the moduli scheme 
of vector bundles does not admit a Torelli theorem is when the genus is 2, the rank is 2 and the degree 
of the determinant is even.
\end{remark}

\section*{Acknowledgments}

We would like to thank the anonymous referee for providing suggestions which helped in improving this paper.
This research was supported by grants PID2019-108936GB-C21, PID2022-142024NB-I00, RED2022-134463-T and CEX2019-000904-S funded by MCIN/AEI/ 10.13039/501100011033.
T. G. thanks Andr\'es Fern\'andez Herrero for helpful discussions. S.M. acknowledges support
of the DAE, Government of India, under Project Identification No. RTI4001. I.B. is partially supported by a J. C. Bose Fellowship (JBR/2023/000003).

\end{document}